\numberwithin{equation}{section}
\theoremstyle{plain}
\newtheorem{theorem}{Theorem}[section]
\newtheorem{lemma}[theorem]{Lemma}
\newtheorem{proposition}[theorem]{Proposition}
\newtheorem{corollary}[theorem]{Corollary}
\theoremstyle{remark}
\newtheorem{example}[theorem]{Example}
\newtheorem{remark}[theorem]{Remark}
\def\mod{\ \mathrm{mod}\ }
\def\exp{\mathrm{exp}}
\def\Dic{\mathrm{Dic}}
\def\Dih{\mathrm{Dih}}
\def\Aut{\mathrm{Aut}}
\def\aut#1{\Aut(#1)}
\def\Hol{\mathrm{Hol}}
\def\hol#1{\Hol(#1)}
\def\Inn{\mathrm{Inn}}
\def\inn#1{\Inn(#1)}
\def\Mlt{\mathrm{Mlt}}
\def\mlt#1{\Mlt(#1)}
\def\Par{\mathrm{Par}}
\def\Iso{\mathrm{Iso}}
\def\Inv{\mathrm{Inv}}
\def\a{\alpha}
\def\Z{\mathbb Z}
\begin{document}

\title{Automorphisms of dihedral-like automorphic loops}

\author[Aboras]{Mouna Aboras}

\author[Vojt\v{e}chovsk\'y]{Petr Vojt\v{e}chovsk\'y}

\address{Department of Mathematics, University of Denver, 2280 S Vine St, Denver, Colorado 80208, U.S.A.}

\email[Aboras]{mmrm9804@yahoo.com}

\email[Vojt\v{e}chovsk\'y]{petr@math.du.edu}

\thanks{Research partially supported by the Simons Foundation Collaboration Grant 210176 to  Petr Vojt\v{e}chovsk\'y.}

\subjclass[2010]{Primary: 20N05. Secondary: 20D45.}

\keywords{Automorphic loop, A-loop, automorphism group, dihedral-like automorphic loop, dihedral group, generalized dihedral group, dicyclic group, generalized dicyclic group, inner mapping group, multiplication group.}

\begin{abstract}
Automorphic loops are loops in which all inner mappings are automorphisms. A large class of automorphic loops is obtained as follows: Let $m$ be a positive even integer, $G$ an abelian group, and $\a$ an automorphism of $G$ that satisfies $\alpha^2=1$ if $m>2$. Then the \emph{dihedral-like automorphic loop} $\Dih(m,G,\a)$ is defined on $\Z_m\times G$ by
\begin{displaymath}
    (i,u)(j,v)=(i+j, ((-1)^{j}u+v)\a^{ij}).
\end{displaymath}
We prove that two finite dihedral-like automorphic loops $\Dih(m,G,\a)$, $\Dih(\overline{m},\overline{G},\overline{\a})$ are isomorphic if and only if $m=\overline{m}$, $G=\overline{G}$, and $\a$ is conjugate to $\overline{\a}$ in the automorphism group of $G$. Moreover, for a finite dihedral-like automorphic loop $Q$ we describe the structure of the automorphism group of $Q$ and its subgroup consisting of inner mappings of $Q$.
\end{abstract}

\maketitle

\section{Introduction}

A groupoid $(Q,\cdot)$ is a \emph{quasigroup} if for every $x\in Q$ the translations
\begin{displaymath}
    L_x:Q \to Q,\ yL_x=xy, \quad R_x:Q\to Q,\ yR_{x}=yx
\end{displaymath}
are bijections of $Q$. If $Q$ is a quasigroup with $1\in Q$ such that $x1=1x=x$ holds for every $x\in Q$, then $Q$ is a \emph{loop}.

Given a loop $Q$, the \emph{multiplication group} of $Q$ is the permutation group
\begin{displaymath}
    \mlt{Q} = \langle L_{x}, R_{x} : x\in Q\rangle,
\end{displaymath}
and the \emph{inner mapping group} of $Q$ is the subgroup
\begin{displaymath}
    \inn{Q} = \{\varphi\in\mlt{Q} : 1\varphi = 1 \}.
\end{displaymath}
It is well known, cf. \cite{Br}, that
\begin{displaymath}
    \inn{Q} = \langle L_{x,y},R_{x,y},T_x : x, y\in Q\rangle,
\end{displaymath}
where
\begin{displaymath}
    L_{x,y}=L_xL_yL_{yx}^{-1},\quad R_{x,y} = R_xR_yR_{xy}^{-1},\quad T_x = R_xL_x^{-1}.
\end{displaymath}

An \emph{automorphic loop} (or \emph{A-loop}) is a loop $Q$ in which every inner mapping is an automorphism, that is, $\inn{Q}\le\aut{Q}$. Note that every group is an automorphic loop, but the converse is certainly not true. The study of automorphic loops began with \cite{BrPa}, and many structural results were obtained in \cite{KiKuPhVo}.

\subsection{Dihedral-like automorphic loops}

Consider the following construction: Let $m>1$ be an integer, $G$ an abelian group, and $\a$ an automorphism of $G$. Define $\Dih(m,G,\a)$ on $\Z_m\times G$ by
\begin{equation}\label{Eq:D}
    (i,u)(j,v)=(i+j,((-1)^ju+v)\a^{ij}),
\end{equation}
where we assume that $i$, $j\in\{0,\dots,m-1\}$ and where we do not reduce modulo $m$ in the exponent of $\alpha$. To save space, we will write
\begin{displaymath}
    s_j = (-1)^j.
\end{displaymath}

Among other results, it was proved in \cite{Ab} that $\Dih(m,G,\a)$ is a loop, and that this loop is automorphic if and only if one of the following conditions hold:
\begin{enumerate}
\item[$\bullet$] $m=2$, or
\item[$\bullet$] $m>2$ is even and $\a^2=1$, or
\item[$\bullet$] $m$ is odd, $\a=1$ and $\mathrm{exp}(G)\le 2$, in which case $Q=\Z_m\times G$ is an abelian group.
\end{enumerate}
To avoid uninteresting cases, we say in this paper that $Q=\Dih(m,G,\a)$ is a \emph{dihedral-like automorphic loop} if either $m=2$ or ($m>2$ is even and $\a^2=1$).

The dihedral-like automorphic loops with $m=2$ were first discussed in \cite{KiKuPhVo}, particularly the case $\Dih(2,\Z_n,\a)$.

Dihedral-like automorphic loops are of interest because they account for many small automorphic loops. For instance, by \cite[Corollary 9.9]{KiKuPhVo}, an automorphic loop of order $2p$, with $p$ an odd prime, is either the cyclic group $\Z_{2p}$ or a loop $\Dih(2,\Z_p,\alpha)$.

\subsection{Notational conventions}

Throughout the paper we will use extensively the following properties and conventions:
\begin{itemize}
\item the word ``nonassociative'' means ``not associative,''
\item automorphic loops are power-associative \cite{BrPa}, and hence powers and two-sided inverses are well-defined,
\item the subgroup $\langle 2\rangle$ of $\Z_m$ will be denoted by $E$,
\item since $m$ is even, we have $s_{i\mod m} = s_i$ for every integer $i$, and it therefore does not matter whether we reduce modulo $m$ in the subscript of $s_i$,
\item $s_{j+k} = s_js_k$ for every $j$, $k$,
\item $(s_ju)\a = s_j(u\a)$ for every $j$ and $u$,
\item except for the exponents of $\a$ and other automorphisms, all statements should be read modulo $m$. (For instance, $E=0$ if $m=2$.)
\end{itemize}
If $\a^2=1$ then $\a^i = \a^{i\mod m}$ for every integer $i$, so it does not matter in this case whether we reduce modulo $m$ in the exponent of $\a$. If $\a^2\ne 1$ (and thus $m=2$) we will assume that all variables are taken from $\{0,1\}$, and we employ $i\oplus j$ whenever necessary to distinguish the addition in $\Z_m$ from the addition of integers.

Consequently, we have
\begin{displaymath}
    \a^i\a^j = \a^{i+j}
\end{displaymath}
in all situations, a key property in calculations. Since $ij+(i\oplus j)k = i(j\oplus k)+jk$ holds for every $i$, $j$, $k\in\{0,1\}$ when $m=2$, we also have
\begin{equation}\label{Eq:Exp}
    \a^{ij+(i\oplus j)k} = \a^{i(j\oplus k)+jk}
\end{equation}
in all situations.

\subsection{Groups among dihedral-like automorphic loops}

Groups are easy to spot among dihedral-like automorphic loops:

\begin{lemma}\label{Lm:G}
Let $Q=\Dih(m,G,\a)$ be a dihedral-like automorphic loop. Then $Q$ is a group if and only if $\a=1$, and it is a commutative group if and only if $\a=1$ and $\exp(G)\le 2$.

Moreover, the group $\Dih(m,G,1)$ is a semidirect product $\Z_m\ltimes_\varphi G$ with multiplication
\begin{displaymath}
    (i,u)(j,v) = (i+j,u\varphi_j+v),
\end{displaymath}
where $\varphi:\Z_m\to\aut{G}$, $j\mapsto \varphi_j$ is given by $u\varphi_j = s_ju$.
\end{lemma}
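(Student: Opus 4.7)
The plan is to split the statement into two equivalences and then describe the semidirect product structure. For the equivalence ``$Q$ is a group iff $\a=1$'', the direction $\a=1\Rightarrow Q$ a group is immediate: substituting $\a=1$ into \eqref{Eq:D} gives $(i,u)(j,v)=(i+j,s_ju+v)$, whose associativity is a one-line check using $s_{j+k}=s_js_k$ and the abelian structure of $G$. For the converse I would test associativity on a single well-chosen triple; the calculation
\[\bigl((1,0)(1,0)\bigr)(0,u)=(2,u)\quad\text{versus}\quad(1,0)\bigl((1,0)(0,u)\bigr)=(2,u\a)\]
(with $2$ read modulo $m$) forces $u\a=u$ for every $u\in G$, hence $\a=1$. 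This test works uniformly for $m=2$ and for $m>2$ with $\a^2=1$, since the only exponents of $\a$ encountered are $0$ and $1$.

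For the commutative-group equivalence: if $\a=1$ and $\exp(G)\le 2$, then $-u=u$ and the formula collapses to coordinatewise addition on $\Z_m\times G$, yielding the abelian group $\Z_m\times G$. Conversely, a commutative group is in particular a group, so the previous equivalence already gives $\a=1$; commutativity of $(i,u)(j,v)=(j,v)(i,u)$ then reduces to $s_ju+v=s_iv+u$, and specializing $i=0$, $j=1$ yields $-u+v=v+u$, whence $2u=0$ for all $u\in G$.

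Finally, the moreover clause is a matter of unpacking definitions: define $\varphi:\Z_m\to\aut{G}$ by $u\varphi_j=s_ju$. Each $\varphi_j$ is an automorphism of $G$ because $G$ is abelian, and $j\mapsto\varphi_j$ is a homomorphism because $s_{j+k}=s_js_k$; the formula $(i,u)(j,v)=(i+j,u\varphi_j+v)$ then coincides with the $\a=1$ case of \eqref{Eq:D}, exhibiting $\Dih(m,G,1)$ as $\Z_m\ltimes_\varphi G$. I do not expect any genuine obstacle here; the only point warranting a moment of care is the $m=2$ subcase, where $\a^2$ need not be trivial, but the associativity test involves only $\a^0$ and $\a^1$, so the conventions on exponents play no role.
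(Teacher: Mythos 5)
Your proposal is correct and follows essentially the same route as the paper: sufficiency of $\a=1$ by direct verification, necessity by specializing associativity to a triple of the form $((1,0),(1,\cdot),(0,u))$ to force $u\a=u$, and the commutativity and semidirect-product claims by direct computation using $s_{j+k}=s_js_k$. The only (cosmetic) difference is that the paper first writes out the full associativity identity \eqref{Eq:Assoc} before specializing, which it then reuses in the proof of Lemma \ref{Lm:MiddleNuc}.
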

\begin{proof}
We have $(i,u)(j,v)\cdot (k,w) = (i,u)\cdot (j,v)(k,w)$ if and only if
\begin{displaymath}
    (s_k(s_ju+v)\a^{ij}+w)\a^{(i\oplus j)k} = (s_{j+k}u+(s_kv+w)\a^{jk})\a^{i(j\oplus k)},
\end{displaymath}
which holds, by \eqref{Eq:Exp}, if and only if
\begin{equation}\label{Eq:Assoc}
    s_{j+k}u\a^{ij+(i\oplus j)k} + w\a^{(i\oplus j)k} = s_{j+k}u\a^{i(j\oplus k)} + w\a^{jk+i(j\oplus k)}.
\end{equation}
With $u=0$, $k=0$ and $i=j=1$, this reduces to $w=w\a$, so $\a=1$ is necessary. Conversely, if $\a=1$, then \eqref{Eq:Assoc} reduces to the trivial identity $s_{j+k}u+w=s_{j+k}u+w$.

The rest is clear from \eqref{Eq:D}. Note that the mapping $\varphi$ is a homomorphism thanks to $s_{j+k} = s_js_k$.
\end{proof}

We will call associative dihedral-like automorphic loops \emph{dihedral-like groups}. The dihedral-like groups encompass the \emph{dihedral groups} $\Dih(2,\Z_n,1) = D_{2n}$, the \emph{generalized dihedral groups} $\Dih(2,G,1) = \Dih(G)$, and certain generalized dicyclic groups $\Dih(4,G,1)$.

Recall that for an abelian group $A$ and an element $y\in A$ of order two the \emph{generalized dicyclic group} $\Dic(A,y)$ is the group generated by $A$ and another element $x$ such that $x^2=y$ and $x^{-1}ax=a^{-1}$ for every $a\in A$, cf. \cite[p. 170]{MiBlDi}. If $A=\Z_{2n}$ and $y$ is the unique element of order two in $A$, then $\Dic(A,y)=\Dic_{4n}$ is the \emph{dicyclic group}.

It is easy to see that $\Dih(4,G,1)$ is isomorphic to $\Dic(\Z_2\times G,(1,0))$, by letting $A=E\times G$, $y=(2,0)$ and $x=(1,0)$. In particular, if $n$ is odd, then $\Dih(4,\Z_n,1)$ is isomorphic to $\Dic_{4n}$.

Dihedral-like groups contain additional classes of groups. For instance, $\Dih(6,\Z_5,1)$ of order $30$ is obviously not generalized dicyclic, and it is not isomorphic to the unique generalized dihedral group $\Dih(2,\Z_{15},1)$ of order $30$.

On the other hand, not every (generalized) dicyclic group is found among dihedral-like groups, e.g. $\Dic_{16}$. This can be verified directly with the \texttt{LOOPS} \cite{LOOPS} package for \texttt{GAP} \cite{GAP}.

\subsection{Summary of results}

In this paper we prove that two finite dihedral-like automorphic loops $\Dih(m,G,\a)$, $\Dih(\overline{m},\overline{G},\overline{\a})$ are isomorphic if and only if $m=\overline{m}$, $G=\overline{G}$, and $\a$ is conjugate to $\overline{\a}$ in $\aut{G}$. We describe the automorphism groups and inner mapping groups of all finite dihedral-like automorphic loops. We do not know how to generalize these results to infinite dihedral-like automorphic loops.

Note that automorphism groups of generalized dihedral groups are well understood, cf. \cite[p.\ 169]{MiBlDi}. If $\exp(G)\le 2$, then $\Dih(2,G,1)$ is an elementary abelian $2$-group whose automorphism group is the general linear group $GL(2,|G|)$. If $\exp(G)>2$, then $\Aut(\Dih(2,G,1))$ is the \emph{holomorph}
\begin{displaymath}
    \Hol(G) = \Aut(G)\ltimes G,\quad (\a,u)(\beta,v) = (\a\beta,u\beta+v).
\end{displaymath}
We will recover these results as special cases.

\subsection{Related constructions}

Several recent papers deal with constructions of automorphic loops. Recall that for a loop $Q$, the \emph{middle nucleus} is defined as
\begin{displaymath}
    N_\mu(Q) = \{y\in Q : (xy)z=x(yz)\text{ for every }x,\,z\in Q\}.
\end{displaymath}

Following \cite{JeKiVo}, for an abelian group $G$ and a bijection $\a$ of $G$, let $G(\a)$ be defined on $\Z_2\times G$ by $(0,u)(0,v)=(0,u+v)$, $(0,u)(1,v)=(1,u+v)=(1,u)(0,v)$, $(1,u)(1,v)=(0,(u+v)\a)$. By \cite[Corollary 2.3]{JeKiVo}, every commutative loop with middle nucleus of index at most two is of the form $G(\a)$.

We shall see (cf. Lemma \ref{Lm:MiddleNuc}) that all dihedral-like automorphic loops have middle nucleus of index at most two. In \cite[Proposition 2.7]{JeKiVo}, all commutative automorphic loops with middle nucleus of index at most two are described.

Let $G$ be an elementary abelian $2$-group and $\a\in\aut{G}$. Then $G(\a)=\Dih(2,G,\a)$. A special case of \cite[Corollary 2.6]{JeKiVo} then says that for $\a$, $\beta\in\Aut(G)$ with $\a\ne 1\ne \beta$, the loops $\Dih(2,G,\a)$, $\Dih(2,G,\beta)$ are isomorphic if and only if  $\a$, $\beta$ are conjugate in $\aut{G}$.

Examples of centerless commutative automorphic loops can be found in \cite{Na}. Commutative automorphic loops of order $p^3$ are classified in \cite{BaGrVo}. Nuclear semidirect products that yield commutative automorphic loops are studied in \cite{HoJe}. Nonassociative commutative automorphic loops of order $pq$ are constructed in \cite{Dr}.

Finally, \cite{KiKuPhVo} contains additional constructions of (not necessarily commutative) automorphic loops, as well as an extensive list of references on automorphic loops.

\section{Squaring and conjugation}

The squaring map $x\mapsto x^2$ and the conjugation maps $T_x$ are key to understanding the loops $Q = \Dih(m,G,\a)$. For $(i,u)\in Q$, let
\begin{displaymath}
    (i,u)\chi = |\{(j,v)\in Q : (j,v)^2=(i,u)\}|,
\end{displaymath}
that is, $(i,u)\chi$ counts the number of times $(i,u)$ is a square in $Q$.

Denote by $G_2$ the subgroup of $G$ consisting of all elements of $G$ of order at most two.

\begin{lemma}\label{Lm:Squaring}
Let $\Dih(m,G,\a)$ be a finite dihedral-like automorphic loop. Then:
\begin{enumerate}
\item[(i)] $(i,u)\chi\le 2|G|$ for every $i$, $u$,
\item[(ii)] $(i,u)\chi = 0$ for every odd $i$ and every $u$,
\item[(iii)] $(i,u)\chi \le |G|$ whenever $u\ne 0$,
\item[(iv)] $(i,0)\chi = |G|+|G_2|$ when $i$ is even and $m/2$ is odd,
\item[(v)] $(2,0)\chi = 2|G|$ when $m/2$ is even.
\end{enumerate}
\end{lemma}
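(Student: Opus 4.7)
The plan is to compute $(j,v)^2$ directly from \eqref{Eq:D} and then count preimages of a given $(i,u)$ by solving $2j\equiv i\pmod m$ for $j$ and then for $v$. From \eqref{Eq:D} one obtains $(j,v)^2=(2j,(s_jv+v)\a^{j^2})$, which specializes sharply according to the parity of $j$: when $j$ is even it becomes $(2j,2v\a^{j^2})$, and when $j$ is odd it becomes $(2j,0)$. Essentially everything in the lemma is bookkeeping built on these two formulas.

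Items (i) and (ii) are immediate. Since $m$ is even, $2j\equiv i\pmod m$ has no solution when $i$ is odd, proving (ii). When $i$ is even the solutions in $\Z_m$ are $j\in\{i/2,\ i/2+m/2\}$, so at most two values of $j$; pairing each with at most $|G|$ choices of $v$ yields (i).

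For (iii), if $u\ne 0$ then the odd-$j$ branch contributes nothing (it always has $0$ in the second coordinate), so only even $j$ are relevant, and for each such $j$ the equation $2v\a^{j^2}=u$ has either $0$ or $|G_2|$ solutions in $v$. If $\exp(G)\le 2$ then $2v=0$ forces $u=0$, a contradiction, so $(i,u)\chi=0$. Otherwise $|G_2|\le|G|/2$, and the at most two even $j$'s together contribute at most $2|G_2|\le|G|$ solutions.

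Finally, (iv) and (v) come from tracking the parities of the two solutions $j=i/2$ and $j=i/2+m/2$. When $m/2$ is odd, these two values have opposite parity: the odd one contributes $|G|$ solutions (since $(j,v)^2=(2j,0)=(i,0)$ for every $v$), and the even one contributes the $|G_2|$ solutions of $2v\a^{j^2}=0$, giving (iv). When $m/2$ is even and $i=2$, both solutions $j=1$ and $j=1+m/2$ are odd, so every $v\in G$ yields $(j,v)^2=(2,0)$, producing the total $2|G|$ in (v). No serious obstacle is expected; the one point requiring genuine care is the $\exp(G)\le 2$ edge case in (iii), where $|G_2|=|G|$ so the bound $2|G_2|\le|G|$ fails and must be replaced by the direct argument that $u=0$ is forced.
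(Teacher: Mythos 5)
Your proposal is correct and follows essentially the same route as the paper: reduce $(j,v)^2=(i,u)$ to the congruence $2j\equiv i\pmod m$ together with $(s_jv+v)\a^{jj}=u$, note that the congruence has $0$ or $2$ solutions, and then sort the cases by the parities of the two solutions and by whether $\exp(G)\le 2$. The edge case you flag in (iii) is exactly the one the paper also handles separately, so there is nothing to add.
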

\begin{proof}
Fix $(i,u)\in Q$. Note that $(j,v)(j,v)=(i,u)$ if and only if
\begin{equation}\label{Eq:Sq1}
    2j\equiv i\pmod m
\end{equation}
and
\begin{equation}\label{Eq:Sq2}
    (s_jv+v)\a^{jj}=u.
\end{equation}
Since $\Z_m\to\Z_m$, $k\mapsto 2k$ is a homomorphism with kernel $\{0,m/2\}$, the congruence \eqref{Eq:Sq1} has either zero solutions or two solutions in $\Z_m$, proving (i).

If $i$ is odd then \eqref{Eq:Sq1} never holds, proving (ii). For the rest of the proof we can assume that $i$ is even, and we denote by $\ell$, $\ell+m/2$ the two solutions to \eqref{Eq:Sq1}.

Suppose that $u\ne 0$. If $\ell$ is odd, then $(s_\ell v+v)\a^{\ell\ell}=0\ne u$ for every $v\in G$, so $(i,u)\chi\le |G|$. Similarly if $\ell+m/2$ is odd, so we can assume that both $\ell$, $\ell+m/2$ are even. Then \eqref{Eq:Sq2} becomes $2v=u$. The mapping $G\to G$, $w \mapsto 2w$  is a homomorphism with kernel $G_2$. If $G=G_2$ then $2v=0$ for all $v\in G$, so $(i,u)\chi=0$. Otherwise $|G_2|\le |G|/2$ and $(i,u)\chi\le 2|G_2|\le |G|$. We have proved (iii) and can assume for the rest of the proof that $u=0$.

Suppose that $m/2$ is odd. Then precisely one of $\ell$, $\ell+m/2$ is odd. Without loss of generality, assume that $\ell$ is odd. With $j=\ell$, \eqref{Eq:Sq2} holds for every $v\in G$. With $j=\ell+m/2$, \eqref{Eq:Sq2} becomes $2v=0$, which holds if and only if $v\in G_2$, proving (iv).

Finally, suppose that $m/2$ is even. Then $\ell=1$ and $\ell+m/2$ are both odd, so $(2,0)\chi = 2|G|$.
\end{proof}

\begin{lemma}\label{Lm:Conjugation}
Let $Q=\Dih(m,G,1)$ be a dihedral-like group. Then
\begin{displaymath}
    (i,u)T_{(j,v)} = (j,v)^{-1}\cdot (i,u)(j,v) = (i, (1-s_i)v+s_ju)
\end{displaymath}
for every $(i,u)$, $(j,v)\in Q$.

In particular, with $I_{(i,u)} = \{(i,u),(i,u)^{-1}\}$, we have:
\begin{enumerate}
\item[(i)] $(0,u)T_{(j,v)}\in I_{(0,u)}$ for every $u\in G$,
\item[(ii)] if $\exp(G)>2$ then for every odd $i$ and every $u\in G$ there is $(j,v)\in Q$ such that $(i,u)T_{(j,v)}\not\in I_{(i,u)}$.
\end{enumerate}
\end{lemma}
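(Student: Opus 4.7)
The plan is to compute everything directly from the multiplication rule for the dihedral-like group, which by Lemma \ref{Lm:G} simplifies to $(i,u)(j,v) = (i+j, s_ju + v)$. Since $Q$ is now a group, inverses are unique and well behaved.

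First I would determine $(j,v)^{-1}$. Writing $(j,v)^{-1} = (j',v')$ and solving $(j+j', s_{j'}v+v') = (0,0)$ gives $j'=-j$ and $v' = -s_{-j}v$. Because $m$ is even, $s_{-j}=s_j$, so $(j,v)^{-1} = (-j,-s_jv)$. Then I would substitute into $(j,v)^{-1}\cdot (i,u)(j,v) = (-j,-s_jv)\cdot(i+j,s_ju+v)$ and simplify the second coordinate using $s_{i+j}s_j = s_i$ and $s_j^2 = 1$:
\begin{displaymath}
    s_{i+j}(-s_jv) + s_ju + v = -s_iv + s_ju + v = (1-s_i)v + s_ju,
\end{displaymath}
yielding the claimed formula $(i,u)T_{(j,v)} = (i,(1-s_i)v + s_ju)$.

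For (i), specializing to $i=0$ forces $s_i=1$, so $(0,u)T_{(j,v)} = (0,s_ju)$. This equals $(0,u)$ when $j$ is even and $(0,-u)=(0,u)^{-1}$ when $j$ is odd (note $(0,u)^{-1}=(0,-u)$ from the inverse formula), so in either case it lies in $I_{(0,u)}$. For (ii), with $i$ odd we have $s_i = -1$ and $(i,u)T_{(j,v)} = (i, 2v+s_ju)$. I would choose $j=0$, obtaining $(i, u+2v)$, and pick $v\in G$ with $2v\neq 0$ (available since $\exp(G)>2$). Then $(i, u+2v) \neq (i,u)$. To rule out $(i,u+2v) = (i,u)^{-1}$, use $(i,u)^{-1} = (-i,u)$ for odd $i$: if $i\neq m/2$ the first coordinates differ, and if $i=m/2$ then $(i,u)^{-1}=(i,u)$ so we have already excluded this case. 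Hence $(i,u)T_{(0,v)}\notin I_{(i,u)}$.

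There is no real obstacle; the only mild subtlety is handling the case split $i = m/2$ versus $i\neq m/2$ in part (ii), where the inverse of $(i,u)$ either collapses to $(i,u)$ itself or sits on a different fiber of the projection to $\Z_m$. Choosing $(j,v) = (0,v)$ with $2v\neq 0$ handles both cases uniformly.
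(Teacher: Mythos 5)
Your proof is correct and follows essentially the same route as the paper: the same direct computation of $(j,v)^{-1}=(-j,-s_jv)$ and of the conjugation formula, and the same choice of conjugator $(0,v)$ with $2v\ne 0$ in part (ii). Your explicit case split $i=m/2$ versus $i\ne m/2$ is a point the paper glosses over slightly, but the argument is the same.
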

\begin{proof}
Note that $(i,u)^{-1} = (-i,-s_iu)$. Since $Q$ is a group and $s_{i+j}=s_is_j$, we have
\begin{align*}
    (i,u)T_{(j,v)} &= (i,u)R_{(j,v)}L_{(j,v)}^{-1} = (j,v)^{-1}\cdot (i,u)(j,v)=\\
        &=(-j,-s_jv)\cdot (i+j,s_ju+v) = (i, s_{i+j}(-s_jv)+s_ju+v) = (i,(1-s_i)v+s_ju),
\end{align*}
as claimed. For $i=0$, we get $(0,u)T_{(j,v)} = (0,s_ju)\in I_{(0,u)}$.

Suppose that $i$ is odd, $\exp(G)>2$ and let $v\in G$ be of order bigger than $2$. We get $(i,u)T_{(0,v)} = (i,2v+u)$, which is different from both $(i,u)$ and $(-i,u)=(-i,-s_iu)=(i,u)^{-1}$.
\end{proof}

\section{The isomorphism problem}\label{Sc:Iso}

Suppose that $\varphi:\Dih(m,G,\a)\to\Dih(\overline{m},\overline{G},\overline{a})$ is an isomorphism of finite dihedral-like loops. Let us first show that $m=\overline{m}$ and $G=\overline{G}$. We will need the following result, which is a combination of \cite[Proposition 9.1]{KiKuPhVo} and \cite[Lemma 4.1]{Ab}. We give a short proof covering both cases. Denote by $E$ the subgroup of $\mathbb Z_m$ generated by $2$.

\begin{lemma}\label{Lm:MiddleNuc}
Let $Q=\Dih(m,G,\a)$ be a nonassociative dihedral-like automorphic loop. Then $N_\mu(Q) = E \times G$.
\end{lemma}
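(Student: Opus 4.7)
The plan is to prove both inclusions by direct expansion from the loop formula \eqref{Eq:D}, using \eqref{Eq:Exp} as the main algebraic lever. First I would expand $(xy)z$ and $x(yz)$ for arbitrary $x=(i,u)$, $y=(j,v)$, $z=(k,w)$; the first coordinates agree as $i\oplus j\oplus k$, and after organising the second coordinates by variable the coefficient of $v$ matches immediately by \eqref{Eq:Exp}. A second application of \eqref{Eq:Exp} rearranges the remaining $\alpha$-exponents so that the identity $(xy)z=x(yz)$ is equivalent to the two simultaneous conditions
\begin{displaymath}
u\alpha^{jk}=u \qquad\text{and}\qquad w\alpha^{-ij}=w
\end{displaymath}
holding for all $u,w\in G$ and all $i,k$.

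For the inclusion $E\times G\subseteq N_\mu(Q)$, I would fix $y=(j,v)$ with $j$ even. Then both $jk$ and $-ij$ are even, so the two conditions hold: either $\alpha^2=1$ kills them when $m>2$, or $j=0$ forces the exponents to vanish outright when $m=2$.

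For the reverse inclusion, I would show $(j,v)\notin N_\mu(Q)$ whenever $j$ is odd by producing explicit $x$ and $z$. Since $Q$ is nonassociative, Lemma~\ref{Lm:G} gives $\alpha\ne 1$, and $j$ odd yields $\alpha^j=\alpha$ (via $\alpha^2=1$ when $m>2$, or $j=1$ when $m=2$); in particular some $w\in G$ satisfies $w\alpha\ne w$. Taking $x=(1,0)$ and $z=(0,w)$, a short expansion gives
\begin{displaymath}
(xy)z=(1\oplus j,\,v\alpha^j+w), \qquad x(yz)=(1\oplus j,\,v\alpha^j+w\alpha^j),
\end{displaymath}
whose second coordinates differ by $w-w\alpha^j=w-w\alpha\ne 0$. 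Hence $y\notin N_\mu(Q)$.

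The main obstacle will be careful bookkeeping around the distinction between the $\mathbb Z_m$-addition $\oplus$ and integer addition $+$ inside exponents of $\alpha$, together with the parity handling of the sign $s_j=(-1)^j$ and the correct reduction from raw exponent differences such as $ij+(i\oplus j)k-i(j\oplus k)$ down to $jk$; beyond that routine care, no deeper structural argument is required.
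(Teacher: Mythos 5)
Your proposal is correct and follows essentially the same route as the paper: both expand the associativity condition $(xy)z=x(yz)$ via \eqref{Eq:D}, use \eqref{Eq:Exp} to cancel the $v$-term and reduce to the condition $\a^j=1$, and then invoke Lemma~\ref{Lm:G} to get $\a\ne 1$ from nonassociativity; your explicit witnesses $x=(1,0)$, $z=(0,w)$ are exactly the paper's specialization $u=0$, $i=1$, $k=0$.
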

\begin{proof}
Note that $(j,v)\in N_\mu(Q)$ if and only if \eqref{Eq:Assoc} holds for all $(i,u)$, $(k,w)\in Q$. Suppose that $(j,v)\in N_\mu(Q)$. With $u=0$, $i=1$ and $k=0$, \eqref{Eq:Assoc} becomes $w=w\a^j$, so $\a^j=1$. Since $\a\ne 1$, by Lemma \ref{Lm:G}, we conclude that $j\in E$. Conversely, if $j\in E$, then \eqref{Eq:Assoc} reduces to the trivial identity $s_ku\a^{ik} + w\a^{ik} = s_ku\a^{ik} + w\a^{ik}$, and $(j,v)\in N_\mu(Q)$ follows.
\end{proof}

\begin{proposition}\label{Pr:mG}
Let $Q=\Dih(m,G,\a)$ be a finite dihedral-like automorphic loop. Then the parameters $m$, $G$ can be recovered from $Q$.
\end{proposition}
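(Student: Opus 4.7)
Plan: I split based on whether $Q$ is associative. In the nonassociative case, Lemma \ref{Lm:MiddleNuc} identifies the subgroup $H := N_\mu(Q) = E \times G$ canonically. For $q = (i,u) \in Q \setminus H$ the first coordinate $i$ is odd, so using $s_i = -1$ and $\a^2 = 1$ in \eqref{Eq:D} one computes $q^2 = (2i, 0)$, and then by an easy induction $q^{2k} = (2ki, 0)$. Hence $\mathrm{ord}(q) = m/\gcd(i, m/2) \le m$, with equality at $i = 1$. Consequently
\begin{displaymath}
    m = \max\{\mathrm{ord}(q) : q \in Q \setminus N_\mu(Q)\},
\end{displaymath}
and then $|G| = |Q|/m$. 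Since $N_\mu(Q) \cong \Z_{m/2} \times G$ as an abelian group and $m/2$ is now known, the cancellation theorem for finite abelian groups determines $G$ uniquely up to isomorphism.

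For the associative case $\a = 1$, Lemma \ref{Lm:G} gives that $Q$ is the group $\Z_m \ltimes_\varphi G$. If $Q$ is abelian, then $\exp(G) \le 2$, so $G \cong (\Z_2)^k$ and $Q \cong \Z_m \times (\Z_2)^k$; because $m$ is even we have $\exp(Q) = m$, from which both $m$ and $G$ are read off. If $Q$ is nonabelian (equivalently $\exp(G) > 2$), a direct commutator calculation yields $Q' = \{0\} \times 2G$ and $Q/Q' \cong \Z_m \times G/2G$ with $G/2G$ elementary abelian. A routine primary-decomposition argument then shows that the invariant factors of $Q/Q'$ always take the form $\Z_2, \ldots, \Z_2, \Z_m$, so $m$ is read off as the largest invariant factor, giving $|G| = |Q|/m$. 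Finally, $Q' \cong 2G$ together with the $\mathbb F_2$-rank of $G/2G$ (also extractable from $Q/Q'$) recovers the abstract group $G$: its odd-order part equals the odd-order part of $Q'$, while its $2$-primary part is obtained from that of $Q'$ by doubling each cyclic factor and then padding with copies of $\Z_2$ until the $2$-torsion rank matches the rank of $G/2G$.

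The main obstacle I anticipate is the associative nonabelian subcase, where careful invariant-factor arithmetic is needed to translate the data $(|Q|, Q', Q/Q')$ back into the pair $(m, G)$. By contrast, the nonassociative case is essentially immediate from Lemma \ref{Lm:MiddleNuc}, and the associative abelian case reduces to recognizing the cyclic factor of $\Z_m \times (\Z_2)^k$ through its exponent.
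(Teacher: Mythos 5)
Your proof is correct, but it takes a genuinely different route from the paper's. The paper first recovers $m$ \emph{uniformly} in all cases from the squaring statistics of Lemma \ref{Lm:Squaring}: the elements that occur as squares the maximal number of times generate exactly $E\times 0$, which has order $m/2$. It then splits into cases only to recover $G$, using Lemma \ref{Lm:Conjugation} in the nonabelian group case to identify the subgroup $E\times G$ as $\langle S\cup I\rangle$ (where $I$ is the set of elements conjugate only to themselves and their inverses), and Lemma \ref{Lm:MiddleNuc} in the nonassociative case; in each case $G$ is then obtained by cancelling $E$ via the Fundamental Theorem of Finite Abelian Groups, exactly as in your cancellation step. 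You instead bypass Lemmas \ref{Lm:Squaring} and \ref{Lm:Conjugation} entirely and extract $m$ by three case-specific devices: the maximal order of an element outside $N_\mu(Q)$ (your order computation $\mathrm{ord}(i,u)=m/\gcd(i,m/2)$ for odd $i$ checks out, including the observation that odd powers have odd first coordinate), the exponent of $Q$ in the abelian case, and the exponent of $Q/Q'$ in the nonabelian group case, where your computation $Q'=0\times 2G$ and $Q/Q'\cong\Z_m\times G/2G$ is also correct. The trade-off: the paper's squaring invariant gives a single clean mechanism for $m$ and a direct identification of $E\times G$ as a subgroup, so $G$ falls out by one cancellation; your version uses only standard group-theoretic invariants (element orders, exponent, derived subgroup) plus the middle nucleus, at the price of the more delicate invariant-factor reconstruction of $G$ from $Q'\cong 2G$ and the rank of $G/2G$ in the nonabelian associative case --- which you correctly flag as the main obstacle, and which does work as you describe.
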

\begin{proof}
Let $s=\max\{(x)\chi : x\in Q\}$ and $S=\{x\in Q;\; (x)\chi = s\}$. By Lemma \ref{Lm:Squaring}, if $m/2$ is odd then $s=|G|+|G_2|$ and $S=E\times 0$, while if $m/2$ is even then $s=2|G|$ and $(2,0)\in S\subseteq E\times 0$. We therefore recover $E\times 0 = \langle S\rangle$ from the known set $S$. Since $|E\times 0|=m/2$, the parameter $m$ is also uniquely determined.

Suppose that $Q$ is a commutative group. Then $\exp(G)\le 2$ by Lemma \ref{Lm:G}. Since $|G|=|Q|/m$ is known, $G$ is uniquely determined.

Now suppose that $Q$ is a group that is not commutative. Then $\exp(G)>2$ by Lemma \ref{Lm:G}. Let $I = \{x\in Q : xT_y\in I_x$ for every $y\in Q\}$. By Lemma \ref{Lm:Conjugation}, $I$ contains $0\times G$ and has empty intersection with $(E+1)\times G$. Hence $\langle S\cup I\rangle = E\times G\le Q$, so $E\times G$ is determined. Since $E$ is known, the Fundamental Theorem of Finite Abelian Groups implies that $G$ is also known.

Finally, suppose that $Q$ is nonassociative. Then $N_\mu(Q)=E\times G$ by Lemma \ref{Lm:MiddleNuc}, and we determine $G$ as above.
\end{proof}

To resolve the isomorphism problem, Propositon \ref{Pr:mG} implies that it remains to study the situation when $\varphi:\Dih(m,G,\a)\to\Dih(m,G,\beta)$ is an isomorphism of finite dihedral-like automorphic loops. By Lemma \ref{Lm:Squaring}, we have
\begin{equation}\label{Eq:Inv1}
    (2,0)\varphi\in E\times 0.
\end{equation}
If $\a\ne 1$ or $\exp(G)>2$, then Lemmas \ref{Lm:Conjugation} and \ref{Lm:MiddleNuc} imply that
\begin{equation}\label{Eq:Inv2}
    (0\times G)\varphi \subseteq E\times G.
\end{equation}
Note that \eqref{Eq:Inv2} can be violated when $\a=1$ and $\exp(G)=2$, for instance by some automorphisms of the generalized dihedral group $\Dih(2,\Z_2\times \Z_2,1)$.

\begin{proposition}\label{Pr:IsoProp}
Suppose that $\varphi:\Dih(m,G,\a)\to \Dih(m,G,\beta)$ is an isomorphism of finite dihedral-like automorphic loops such that either $\a\ne 1$ or $\exp(G)>2$. Then there are $\gamma\in\aut{G}$ and $z\in G$ such that
\begin{enumerate}
    \item[(i)] $(E\times u)\varphi=E\times u\gamma$ for every $u \in G$,
    \item[(ii)] $((E+1)\times u)\varphi=(E+1)\times (z + u\gamma)$ for every $u \in G$,
    \item[(iii)]$\alpha^{\gamma}=\beta$.
\end{enumerate}
\end{proposition}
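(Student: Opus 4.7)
The strategy is to bootstrap from \eqref{Eq:Inv1} and \eqref{Eq:Inv2}: first identify the restriction of $\varphi$ to the subloop $E\times G$, which will produce $\gamma$; then analyze a single coset representative of $(E+1)\times G$, which will produce $z$; then read off $\alpha^\gamma=\beta$ from a single carefully chosen product.

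I first show $(E\times 0)\varphi=E\times 0$. The element $(2,0)$ has order $m/2$ and generates the cyclic subloop $E\times 0$, and since $\varphi$ preserves orders, \eqref{Eq:Inv1} forces $(2,0)\varphi$ to be a generator of the cyclic group $E\times 0$. Next, using \eqref{Eq:Inv2}, I write $(0,u)\varphi=(f(u),g(u))$ with $f(u)\in E$. A direct computation shows that $E\times G$ is abelian under componentwise addition (the factor $\alpha^{4ij}$ in the product $(2i,u)(2j,v)$ is trivial because $\alpha^2=1$ when $m>2$, and $E=0$ when $m=2$), so the restrictions $f\colon G\to E$ and $g\colon G\to G$ are group homomorphisms. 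Set $\gamma:=g$.

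For (i), I note $E\times u=(0,u)\cdot (E\times 0)$ in $\Dih(m,G,\a)$, so applying $\varphi$ gives $(E\times u)\varphi=(f(u),g(u))\cdot (E\times 0)=E\times g(u)$ by the componentwise structure of $E\times G$. In particular $(E\times G)\varphi=E\times G$, hence $((E+1)\times G)\varphi=(E+1)\times G$, and the bijection induced on $(E\times G)/(E\times 0)\cong G$ is exactly $\gamma$, proving $\gamma\in\Aut(G)$. For (ii), I fix $(1,0)\varphi=(a,z)$ with $a\in E+1$. Since $(1,0)(0,u)=(1,u)$ in $\Dih(m,G,\a)$, we have $\varphi(1,u)=(a,z)(f(u),g(u))$; in $\Dih(m,G,\beta)$ the exponent $a\cdot f(u)$ is even, and since $\beta^2=1$ for $m>2$ while $f(u)=0$ for $m=2$, the $\beta$-factor disappears, and $(-1)^{f(u)}=1$, so $\varphi(1,u)=(a+f(u),z+g(u))$. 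Then $((E+1)\times u)\varphi=\varphi(1,u)\cdot (E\times 0)=(E+1)\times (z+u\gamma)$ by the same coset argument.

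Finally, (iii) drops out of applying $\varphi$ to the identity $(1,0)(1,u)=(2,u\a)$ in $\Dih(m,G,\a)$. The right-hand side, by (i), has second coordinate $(u\a)\gamma$. The left-hand side equals $(a,z)(a+f(u),z+g(u))$ computed in $\Dih(m,G,\beta)$: now the $\beta$-exponent $a(a+f(u))$ is odd (since $a$ is odd and $f(u)$ is even), so the $\beta$-factor is exactly $\beta$, while $(-1)^{a+f(u)}=-1$ cancels the two $z$'s, leaving second coordinate $g(u)\beta=(u\gamma)\beta$. Equating yields $(u\a)\gamma=(u\gamma)\beta$ for every $u$, that is, $\a\gamma=\gamma\beta$, equivalently $\a^\gamma=\beta$. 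The main obstacle is the uniform parity bookkeeping of the $\beta$-exponents across the two computations: we need them to vanish in (ii) but to survive as a single $\beta$ in (iii). The dichotomy $\beta^2=1$ for $m>2$ versus $E=0$ for $m=2$ reduces both to parity of integers. A secondary subtlety is that $(1,0)$ is not canonical inside its $E\times 0$-coset, so the extracted $z$ depends on the choice; what is canonical — and what (ii) actually asserts — is only the image coset $(E+1)\times(z+u\gamma)$.
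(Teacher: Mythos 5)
Your proof is correct and follows essentially the same route as the paper's: both bootstrap from \eqref{Eq:Inv1} and \eqref{Eq:Inv2} to show that $\varphi$ permutes the cosets $E\times u$, define $\gamma$ as the induced map on $(E\times G)/(E\times 0)\cong G$, and then extract (ii) and (iii) from the products $(1,0)(0,u)$ and $(1,0)(1,u)=(2,u\a)$ with the same parity bookkeeping for the $\beta$-exponents. The only cosmetic difference is that you pin down $(E\times 0)\varphi=E\times 0$ via order-preservation and cyclicity of $\langle(2,0)\rangle$, where the paper instead runs an induction using $(2i+2,u)=(2,0)(2i,u)$.
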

\begin{proof}
Denote the multiplication in $\Dih(m,G,\beta)$ by $*$, and fix $u\in G$. By \eqref{Eq:Inv2}, $(0,u)\varphi\in E\times v$ for some $v\in G$. We claim that $(E\times u)\varphi = E\times v$. If $(2i,u)\varphi\in E\times v$ for some $i$, then $(2i+2, u)\varphi = ((2,0)(2i,u))\varphi = (2,0)\varphi * (2i,u)\varphi\in (E\times 0)*(E\times v)\subseteq E\times v$, where we have used \eqref{Eq:Inv1}, and where the last inclusion follows from \eqref{Eq:D}. Hence $(E\times u)\varphi\subseteq E\times v$, and the equality holds because $E$ is finite and $\varphi$ is one-to-one.

We can therefore define $\gamma:G\to G$ by $(E\times u)\varphi = E\times u\gamma$. Since $\varphi$ is one-to-one, $\gamma$ is one-to-one. Due to finiteness of $G$, $\gamma$ is also onto $G$. Moreover, $((0,u)(0,v))\varphi = (0,u+v)\varphi\in E\times (u+v)\gamma$ and $(0,u)\varphi * (0,v)\varphi\in (E\times u\gamma) * (E\times v\gamma)\subseteq E \times (u\gamma+v\gamma)$ show that $\gamma$ is a homomorphism, proving (i).

We have seen that $(E\times G)\varphi = E\times G$, and therefore also $((E+1)\times G)\varphi = (E+1)\times G$. Let $z\in G$ be such that $(1,0)\varphi \in (E+1)\times z$. Since $(E+1)\times u = (1,0)(E\times u)$, we have $((E+1)\times u)\varphi = (1,0)\varphi * (E\times u)\varphi \in ((E+1)\times z)*(E\times u\gamma) \subseteq (E+1)\times (z+u\gamma)$, proving (ii).

Finally, we have $((1,0)(1,u))\varphi = (2,u\alpha)\varphi \in E\times u\alpha\gamma$ and $(1,0)\varphi * (1,u)\varphi\in ((E+1)\times z)*((E+1)\times (z+u\gamma))\subseteq E\times (-z+z+u\gamma)\beta=E\times u\gamma\beta$ for every $u\in G$. Hence $\alpha\gamma=\gamma\beta$, proving (iii).
\end{proof}

\begin{theorem}\label{Th:IsoProblem}
Two finite dihedral-like automorphic loops $\Dih(m,G,\a)$ and $\Dih(\overline{m},\overline{G},\overline{\a})$ are isomorphic if and only if $m=\overline{m}$, $G=\overline{G}$ and $\a$ is conjugate to $\overline{\a}$ in $\aut{G}$.
\end{theorem}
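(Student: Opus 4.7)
The plan is to combine the two reductions already established, Propositions \ref{Pr:mG} and \ref{Pr:IsoProp}, to handle the forward direction, and to exhibit an explicit twist isomorphism for the converse. The one subtlety is disposing of the commutative-group edge case that falls outside the hypothesis of Proposition \ref{Pr:IsoProp}.

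For the forward direction, suppose $\varphi:\Dih(m,G,\a)\to\Dih(\overline{m},\overline{G},\overline{\a})$ is an isomorphism. Proposition \ref{Pr:mG} recovers $m$ and $G$ from the loop alone, so $m=\overline{m}$ and $G=\overline{G}$ automatically. To conclude that $\a$ and $\overline{\a}$ are conjugate in $\aut{G}$, I split on whether the hypothesis of Proposition \ref{Pr:IsoProp} holds. If $\a\ne 1$ or $\exp(G)>2$, then part (iii) of that proposition applied to $\varphi$ directly supplies $\gamma\in\aut{G}$ with $\a^{\gamma}=\overline{\a}$. Otherwise $\a=1$ and $\exp(G)\le 2$, and by Lemma \ref{Lm:G} the loop $\Dih(m,G,1)$ is a commutative group; since $\Dih(m,G,\overline{\a})$ is isomorphic to it, it is a commutative group as well, and Lemma \ref{Lm:G} forces $\overline{\a}=1=\a$, so the conjugacy is trivial.

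For the converse, assume $\overline{\a}=\gamma^{-1}\a\gamma$ for some $\gamma\in\aut{G}$, and define $\varphi:\Dih(m,G,\a)\to\Dih(m,G,\overline{\a})$ by $\varphi(i,u)=(i,u\gamma)$, which is visibly a bijection. Writing $*$ for the multiplication in the target loop, a direct computation gives $\varphi((i,u)(j,v))=(i+j,((-1)^{j}u+v)\a^{ij}\gamma)$ and $\varphi(i,u)*\varphi(j,v)=(i+j,((-1)^{j}u+v)\gamma\,\overline{\a}^{ij})$, so $\varphi$ is a homomorphism as soon as $\a^{ij}\gamma=\gamma\,\overline{\a}^{ij}$ for all $i,j$, which follows from the relation $\a\gamma=\gamma\overline{\a}$ by induction on $ij$. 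The only real obstacle is the edge case discussed above; everything else is an assembly of the preparatory lemmas together with one short direct check.
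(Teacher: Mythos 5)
Your proposal is correct and follows essentially the same route as the paper: reduce to $m=\overline{m}$, $G=\overline{G}$ via Proposition \ref{Pr:mG}, invoke Proposition \ref{Pr:IsoProp}(iii) for the forward direction, and exhibit $(i,u)\mapsto(i,u\gamma)$ for the converse. The only (immaterial) difference is the case split in the forward direction --- the paper separates on $\a=1$ versus $\a\ne 1$, while you separate on the exact hypothesis of Proposition \ref{Pr:IsoProp}; both correctly dispose of the residual case by observing that being a (commutative) group is an isomorphism invariant and applying Lemma \ref{Lm:G}.
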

\begin{proof}
By Proposition \ref{Pr:mG}, we can assume that $m=\overline{m}$ and $G=\overline{G}$. Let $(Q,\cdot)=\Dih(m,G,\a)$ and $(\overline{Q},*) = \Dih(m,G,\overline{\a})$.

Suppose that $\varphi:Q\to\overline{Q}$ is an isomorphism. If $\a=1$, then $Q$ is a group by Lemma \ref{Lm:G}, hence $\overline{Q}$ is a group, hence $\overline{\a}=1$ by Lemma \ref{Lm:G}, and so $\a$, $\overline{\a}$ are trivially conjugate in $\aut{G}$. We can therefore assume that $\a\ne 1\ne \overline{\a}$. Then $Q$, $\overline{Q}$ are nonassociative by Lemma \ref{Lm:G}, so Proposition \ref{Pr:IsoProp} implies $\a^\gamma=\overline{\a}$ for some $\gamma\in \aut{G}$.

Conversely, suppose that $\a^\gamma=\overline{\a}$ for some $\gamma\in \aut{G}$. Define a bijection $\varphi:Q\to \overline{Q}$ by $(i,u)\varphi=(i,u\gamma)$. Then $((i,u)(j,v))\varphi=(i+j, (s_ju+v)\alpha^{ij})\varphi=(i+j, (s_ju+v)\alpha^{ij}\gamma)$, while $(i,u)\varphi* (j,v)\varphi=(i,u\gamma)*(j,v\gamma)=(i+j,(s_ju\gamma+v\gamma)\overline{\a}^{ij})=(i+j,(s_ju+v)\gamma\overline{\a}^{ij})$. Since $\alpha^{ij}\gamma=\gamma\overline{\a}^{ij}$ holds for every $i$, $j\in \Z_m$ (due to the fact that either $m=2$ or $\a^2=1=\overline{\a}^2$), we see that $\varphi$ is a homomorphism.
\end{proof}

We recover \cite[Corollary 9.4]{KiKuPhVo} as a special case of Theorem \ref{Th:IsoProblem}, using the fact that $\aut{\Z_n}$ is a commutative group:

\begin{corollary}
The dihedral-like automorphic loops $\Dih(2,\Z_n,\a)$, $\Dih(2,\Z_n,\beta)$ are isomorphic if and only if $\a=\beta$.
\end{corollary}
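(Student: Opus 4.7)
The plan is to invoke Theorem \ref{Th:IsoProblem} directly in the special case $m=\overline{m}=2$, $G=\overline{G}=\Z_n$. The theorem then reduces the isomorphism question for $\Dih(2,\Z_n,\a)$ and $\Dih(2,\Z_n,\beta)$ to whether $\a$ and $\beta$ are conjugate inside $\aut{\Z_n}$. So the entire content of the corollary is the remark that in $\aut{\Z_n}$, conjugacy collapses to equality.

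First I would recall the well-known identification $\aut{\Z_n}\cong (\Z/n\Z)^*$ under multiplication, which is a commutative group. In any abelian group the conjugacy class of an element $\a$ is the singleton $\{\a\}$, so two elements are conjugate if and only if they are equal. Combined with Theorem \ref{Th:IsoProblem}, this gives $\Dih(2,\Z_n,\a)\cong\Dih(2,\Z_n,\beta)$ iff $\a=\beta$. There is no real obstacle here; the corollary is a direct specialization of the main isomorphism theorem to the case when $\aut{G}$ happens to be abelian.
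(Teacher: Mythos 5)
Your proposal is correct and matches the paper's own justification exactly: the corollary is stated as a special case of Theorem \ref{Th:IsoProblem}, ``using the fact that $\aut{\Z_n}$ is a commutative group,'' so conjugacy of $\a$ and $\beta$ collapses to equality. Nothing further is needed.
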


\section{All isomorphisms}

In this section we refine Proposition \ref{Pr:IsoProp} and describe all isomorphisms between finite dihedral-like automorphic loops, except for the case when both $\a=1$ and $\exp(G)\le 2$. In the next section we deal with the special case of automorphisms.

Given two finite dihedral-like automorphic loops $\Dih(m,G,\a)$ and $\Dih(m,G,\beta)$, let
\begin{displaymath}
    \Iso(m,G,\a,\beta)
\end{displaymath}
be the (possibly empty) set of all isomorphisms $\Dih(m,G,\a)\to\Dih(m,G,\beta)$. We will show that there is a one-to-one correspondence between $\Iso(m,G,\a,\beta)$ and the parameter set
\begin{displaymath}
    \Par(m,G,\a,\beta)
\end{displaymath}
consisting of all quadruples
\begin{displaymath}
    (\gamma,z,c,h)
\end{displaymath}
such that:
\begin{itemize}
    \item $\gamma\in\aut{G}$ satisfies $\a^\gamma = \beta$,
    \item $z\in G$,
    \item $c\in\Z_m$ is odd and $\gcd(c,m/2)=1$,
    \item $h$ is a homomorphism $G\to \langle m/2\rangle$ that is trivial if $m/2$ is odd, and that satisfies $\a h=h$ if $m/2$ is even.
\end{itemize}

\begin{remark}\label{Rm:Params}
Let us observe the following facts about the parameters:

Let $\phi$ be the Euler function, that is, $n\phi = |\{k :1\le k\le n$, $\gcd(k,n)=1\}|$. We claim that there are $(m/2)\phi$ choices for $c$ when $m/2$ is odd, and $2\cdot (m/2)\phi$ choices for $c$ when $m/2$ is even. Indeed, there are $(m/2)\phi$ integers $1\le k\le m/2$ such that $\gcd(k,m/2)=1$, and we have $\gcd(k,m/2) = \gcd(k+m/2,m/2)$. If $m/2$ is even, then all such $k$ are necessarily odd, $k+m/2$ is also odd, and so there are $2\cdot (m/2)\phi$ choices for $c$. If $m/2$ is odd, then precisely one of $k$ and $k+m/2$ is odd, and we therefore find precisely $(m/2)\phi$ choices for $c$.

If $h:G\to\langle m/2\rangle\cong\mathbb Z_2$ is a homomorphism, then $K=\ker(h)$ is a subgroup of $G$ of index at most $2$. The condition $\a h = h$ then guarantees that $K\a = K$. Conversely, if $K\le G$ is of index at most two and such that $K\a = K$, then $h:G\to\langle m/2\rangle$ defined by
    \begin{displaymath}
        uh = \left\{\begin{array}{ll}
            0,&\text{ if $u\in K$},\\
            m/2,&\text{ if $u\in G\setminus K$}
        \end{array}\right.
    \end{displaymath}
    is a homomorphism satisfying $\alpha h = h$. We can therefore count the homomorphisms $h$ by counting $\a$-invariant subgroups of index at most $2$ in $G$.
\end{remark}

To facilitate the purported correspondence, define
\begin{displaymath}
    \Psi:\Iso(m,G,\a,\beta)\to\Par(m,G,\a,\beta)
\end{displaymath}
by $\varphi\Psi = (\gamma,z,c,h)$, where
\begin{displaymath}
    (0,u)\varphi = (uh,u\gamma),\quad (1,0)\varphi = (c,z),
\end{displaymath}
and, conversely,
\begin{displaymath}
    \Phi:\Par(m,G,\a,\beta)\to\Iso(m,G,\a,\beta)
\end{displaymath}
by $(\gamma,z,c,h)\Phi = \varphi$, where
\begin{equation}\label{Eq:FromParToIso}
    (i,u)\varphi = (ic+uh, (i\mod 2)z + u\gamma).
\end{equation}

\begin{proposition}\label{Pr:AllIso}
Let $\Dih(m,G,\a)$, $\Dih(m,G,\beta)$ be finite dihedral-like automorphic loops such that either $\a\ne 1$ or $\exp(G)>2$. Then $\Psi:\Iso(m,G,\a,\beta)\to \Par(m,G,\a,\beta)$ and $\Phi:\Par(m,G,\a,\beta)\to \Iso(m,G,\a,\beta)$ are mutually inverse bijections.
\end{proposition}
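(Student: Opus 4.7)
The plan is to verify four things in turn: that $\Phi$ lands in $\Iso(m,G,\a,\beta)$, that $\Psi$ lands in $\Par(m,G,\a,\beta)$, and that the two maps are mutually inverse. A useful preliminary, provable by a short induction on $i$ using $\beta^2=1$, is the identity $(c,z)^i = (ic, (i\mod 2)z)$ in $\Dih(m,G,\beta)$ whenever $c$ is odd.

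For $\Phi$, given $(\gamma,z,c,h)\in\Par$, injectivity of the map $\varphi$ defined by \eqref{Eq:FromParToIso} follows from $\gcd(c,m)=1$ (a consequence of $c$ odd together with $\gcd(c,m/2)=1$) combined with $uh\in\{0,m/2\}$; finiteness of $Q$ then gives bijectivity. The homomorphism property amounts to matching both coordinates of $((i,u)(j,v))\varphi$ and $(i,u)\varphi*(j,v)\varphi$. The first coordinates agree because $s_j(uh)=uh$ in $\Z_m$ (since $uh\in\langle m/2\rangle$) and because $w\a^{ij}h=wh$ (which reduces to $\a h=h$ when $ij$ is odd, guaranteed either because $h$ is trivial when $m/2$ is odd or because $\a h=h$ is imposed when $m/2$ is even). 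For the second coordinates, I would use $\a^\gamma=\beta$ to rewrite $\a^{ij}\gamma$ as $\gamma\beta^{ij}$ and reduce the check to the scalar identity
\[
((i+j)\mod 2)\,z \;=\; \bigl(s_j(i\mod 2)+(j\mod 2)\bigr)\,z\,\beta^{ij},
\]
which holds in each of the four parity cases of $(i,j)$.

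For $\Psi$, Proposition \ref{Pr:IsoProp} supplies $\gamma\in\aut{G}$ with $\a^\gamma=\beta$ and the coset shapes $(E\times u)\varphi=E\times u\gamma$ and $((E+1)\times u)\varphi=(E+1)\times(z+u\gamma)$. Writing $(0,u)\varphi=(uh,u\gamma)$ and $(1,0)\varphi=(c,z)$, we immediately get $c$ odd and $uh\in E$. Applying $\varphi$ to $(0,u)^2=(0,2u)$ forces $2uh\equiv 0\pmod m$, so $uh\in\langle m/2\rangle$. Applying $\varphi$ to $(0,u)(0,v)=(0,u+v)$ shows $h$ is a homomorphism. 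Since $(2i,0)=(1,0)^{2i}$ maps under $\varphi$ to $(2ic,0)$ and $\varphi$ bijects $E\times 0$ onto itself, multiplication by $c$ is invertible on $\Z_{m/2}$, yielding $\gcd(c,m/2)=1$. Finally, comparing first coordinates in $\varphi((1,u)(1,0))=\varphi(2,u\a)$ produces $\a h=h$.

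The main obstacle is showing $h$ is trivial when $m/2$ is odd. The plan is to assume $h$ nontrivial and derive a contradiction with the standing hypothesis. Applying $\varphi$ to $(0,u)^2=(0,2u)$ for $u\notin\ker h$ forces $2u=0$, and applying it to $(0,u)(0,v)=(0,u+v)$ with $uh=0$, $vh=m/2$ forces $2u=0$ for $u\in\ker h$ as well, so $\exp(G)\le 2$. Next, running through the parity cases of $(uh,vh)$ in $\varphi((1,u)(1,v))=\varphi(2,(u+v)\a)$, and using $m/2$ odd to pin down the exponents of $\beta$, one derives $u\gamma\beta=u\gamma$ for every $u\in G$, hence $\beta=1$ and therefore $\a=1$. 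Together with $\exp(G)\le 2$ this contradicts the hypothesis that $\a\ne 1$ or $\exp(G)>2$. The mutual inverse check is then short: $\Psi\circ\Phi=\mathrm{id}$ is immediate from the construction of $\Phi$, while for $\Phi\circ\Psi=\mathrm{id}$ one writes $(i,u)=(1,0)^i(0,u)$ and computes $\varphi(i,u)=(c,z)^i*(uh,u\gamma)=(ic,(i\mod 2)z)*(uh,u\gamma)$, simplifying via $uh\in\{0,m/2\}$ together with ($h$ trivial or $m/2$ even) to recover \eqref{Eq:FromParToIso}.
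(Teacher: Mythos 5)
Your overall architecture matches the paper's (extract $(\gamma,z,c,h)$ via Proposition \ref{Pr:IsoProp}, verify membership in $\Par(m,G,\a,\beta)$, check the two compositions), but there is a genuine gap in the $\Psi$ direction: the step ``applying $\varphi$ to $(0,u)^2=(0,2u)$ forces $2uh\equiv 0\pmod m$'' is false. Since $uh\in E$ is even, $s_{uh}=1$, so $\bigl((0,u)\varphi\bigr)^2=(uh,u\gamma)*(uh,u\gamma)=\bigl(2uh,(2u\gamma)\beta^{(uh)(uh)}\bigr)=(2uh,2u\gamma)$, while $(0,2u)\varphi=((2u)h,(2u)\gamma)$. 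Comparing first coordinates yields only $(2u)h=2(uh)$, i.e.\ the homomorphism property of $h$ that you already obtain from $(0,u)(0,v)=(0,u+v)$; it places no constraint on $2uh$. To force $2uh=0$ one needs a relation in which $uh$ changes sign, and that requires an odd first coordinate: the paper compares $((0,u)(1,0))\varphi=(1,-u)\varphi=(c-uh,z-u\gamma)$ with $(0,u)\varphi*(1,0)\varphi=(uh,u\gamma)*(c,z)=(c+uh,-u\gamma+z)$, where $s_c=-1$ because $c$ is odd, and $c-uh=c+uh$ gives $2uh=0$. Without this, the containment $h(G)\subseteq\langle m/2\rangle$ required for membership in $\Par(m,G,\a,\beta)$ is simply not established.

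The gap propagates into your treatment of the case $m/2$ odd. Once $2uh=0$ and $uh\in E$ are both known, triviality of $h$ is immediate, since $E\cap\langle m/2\rangle=0$ when $m/2$ is odd; no contradiction argument is needed. Moreover, your contradiction argument conflicts with a fact you have already recorded: it hinges on the case $uh=m/2$ with $m/2$ odd, so that $s_{uh}=-1$ in the computation of $\bigl((0,u)\varphi\bigr)^2$, but Proposition \ref{Pr:IsoProp} gives $uh\in E$, so $uh$ is even and this sign flip never occurs; the computations ``forcing $2u=0$'' therefore rest on a sign that cannot arise. The remaining parts of the proposal --- bijectivity and the homomorphism check for $\Phi$, the derivations of $\gcd(c,m/2)=1$ and $\a h=h$, and the two inverse compositions --- are sound and follow the paper's route, modulo the harmless slip that $(1,u)(1,0)=(2,-u\a)$ rather than $(2,u\a)$ (the paper uses $(1,0)(1,u)$).
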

\begin{proof}
Throughout the proof we write $\bar{i}$ instead of $i\mod 2$. Let $(Q_\a,\cdot) = \Dih(m,G,\a)$, $(Q_\beta,*) = \Dih(m,G,\beta)$, and suppose that $\varphi:Q_\a\to Q_\beta$ is an isomorphism. By Proposition \ref{Pr:IsoProp}, there are $\gamma\in\aut{G}$, $z\in G$, $c\in E+1$ and $h:G\to E$ such that $(0,u)\varphi = (uh,u\gamma)$ and $(1,0)\varphi = (c,z)$. Now, if for some $i\in\Z_m$ we have $(i,0)\varphi = (ic,\bar i z)$, then
\begin{displaymath}
    (i+1,0)\varphi = (i,0)\varphi*(1,0)\varphi = (ic,\bar i z)*(c,z) = ((i+1)c, (-\bar i z+z)\beta^{icc})
\end{displaymath}
because $c$ is odd. When $i$ is odd, the second coordinate becomes $(-z+z)\beta^{icc} = 0 = \overline{i+1}z$, while if $i$ is even, it becomes $z = \overline{i+1}z$. By induction, $(i,0)\varphi = (ic,\bar i z)$ for every $i\in\Z_m$, and we have
\begin{displaymath}
    (i,u)\varphi = (i,0)\varphi*(0,u)\varphi = (ic,\bar i z)*(uh,u\gamma) = (ic+uh,\bar i z+u\gamma),
\end{displaymath}
since $uh$ is even. We have recovered the formula \eqref{Eq:FromParToIso}.

Since $\varphi$ is an isomorphism, we see from $(2i,0)\varphi = (2ic,0)$ that $\{2ic : i\in \Z_m\}=E$, so $\gcd(2c,m)=2$, and $\gcd(c,m/2)=1$. Since $((0, u)(0, v))\varphi=(0, u+v)\varphi=((u+v)h, (u+v)\gamma)$ and $(0, u)\varphi*(0, v)\varphi=(uh, u\gamma)*(vh, v\gamma)=(uh+vh, u\gamma+v\gamma)$, $h$ is a homomorphism $G\to E$. Moreover, $((0,u)(1,0))\varphi = (1,-u)\varphi = (c-uh,z-u\gamma)$ and $(0,u)\varphi*(1,0)\varphi = (uh,u\gamma)*(c,z) = (c+uh,-u\gamma+z)$ show that $2uh=0$ for every $u$, and therefore $h$ is also a homomorphism $G\to\langle m/2\rangle$. If $m/2$ is odd then $E\cap\langle m/2\rangle = 0$, so $h$ is trivial. If $m/2$ is even, we further calculate $((1,0)(1,u))\varphi = (2,u\a)\varphi \in (2c+u\a h)\times G$ and $(1,0)\varphi*(1,u)\varphi = (c,z)*(c+uh,z+u\gamma) \in (2c+uh)\times G$, so $\a h = h$. This means that $(\gamma,z,h,c)\in\Par(m,G,\a,\beta)$, and we have proved along the way that $\Psi\Phi = 1$.

Conversely, let $(\gamma,z,c,h)\in\Par(m,G,\a,\beta)$ be given, and let $\varphi = (\gamma,z,c,h)\Phi$. It is easy to see that $\varphi$ is a bijection, and we proceed to prove that $\varphi$ is a homomorphism. We have
\begin{align*}
    ((i,u)(j,v))\varphi &= (i+j,(s_ju+v)\a^{ij})\varphi\\
        &= ((i+j)c+(s_ju+v)\a^{ij}h,\overline{i+j}z + (s_ju+v)\a^{ij}\gamma),
\end{align*}
and
\begin{align*}
    (i,u)\varphi*(j,v)\varphi &= (ic+uh,\bar i z+u\gamma)*(jc+vh,\bar j z +v\gamma)\\
        &=((i+j)c+(u+v)h, (s_{jc+vh}(\bar i z+u\gamma) + \bar j z+v\gamma)\beta^{(ic+uh)(jc+vh)}).
\end{align*}
Since $\a h = h$ and $h:G\to\langle m/2\rangle$, we see that $(s_ju+v)\a^{ij} h = (s_ju+v)h = (u+v)h$. Since $uh$, $vh\in E$, $c\in E+1$ and either $m=2$ or $\beta^2=1$, we have $\beta^{(ic+uh)(jc+vh)} = \beta^{ij}$. Using $vh\in E$ and $c\in E+1$, we get $s_{jc+vh} = s_j$. It therefore remains to show that
\begin{displaymath}
    \overline{i+j}z + (s_ju+v)\a^{ij}\gamma = (s_j(\bar i z+u\gamma) + \bar j z+v\gamma)\beta^{ij}.
\end{displaymath}
But $\a^\gamma = \beta$, so $\a^{ij}\gamma = \gamma\beta^{ij}$ for all $i$, $j$, and we need to show
\begin{displaymath}
    \overline{i+j}z = (s_j\bar i z + \bar j z)\beta^{ij}.
\end{displaymath}
When $j$ is even, this reduces to the trivial identity $\bar i z = \bar i z$. When $j$ is odd, we need to show
\begin{displaymath}
    \overline{i+1} z = (-\bar i z + z)\beta^i.
\end{displaymath}
When $i$ is even, we get $z=z$. When $i$ is odd, we get $0=(-z+z)\beta^i$.

Hence $\varphi$ is an isomorphism. From \eqref{Eq:FromParToIso} we get $(0,u)\varphi = (uh,u\gamma)$ and $(1,0)\varphi = (c,z)$, proving $\Phi\Psi=1$.
\end{proof}

\begin{example}\label{Ex:Even}
Let $m=12$, $G=\Z_4$, let $\a=\beta$ be the unique nontrivial automorphism of $G$, and let $Q=\Dih(m,G,\a)$. Then $\Iso(m,G,\a,\beta) = \aut{Q}$. There are $2$ choices for $\gamma$ (since $\aut{G}$ is commutative), $4$ choices for $z\in G$, $2\cdot (m/2)\phi = 4$ choices for $c$, and $2$ choices for $h$, corresponding to the subgroups $K=G$ and $K=\{0,2\}$. Altogether, $|\aut{Q}| = |\Par(m,G,\alpha,\beta)| =64$.
\end{example}

\begin{example}\label{Ex:Odd}
Let $m=6$, $u=(1,2)$, $v=(3,4,5,6)$, $G=\langle u,v\rangle\cong \Z_2\times \Z_4$, let $\a\in\aut{G}$ be determined by $u\a = uv^2$, $v\a = v$, and $\beta\in\aut{G}$ by $u\beta = uv^2$, $v\beta = v^3$. Note that $\aut{G}\cong \Dih(2,\Z_4,1)$. Let us calculate $|\Par(m,G,\a,\beta)|$. There are $4$ choices for $\gamma\in\aut{G}$ such that $\a^\gamma=\beta$, $8$ choices for $z\in G$, $(m/2)\phi = 2$ choices for $c$, and $1$ choice for $h$ since $m/2$ is odd. Altogether, $|\Iso(m,G,\a,\beta)| = |\Par(m,G,\a,\beta)| = 64$.
\end{example}

\section{Automorphism groups}

In this section we describe automorphism groups of all finite dihedral-like automorphic loops.

\begin{remark}
The only finite dihedral-like automorphic loops not covered by Proposition \ref{Pr:AllIso} are those loops $Q=\Dih(m,G,\a)$ with $\a=1$ and $G$ a finite abelian group of exponent at most two. A direct inspection of \eqref{Eq:D} shows that then $Q = \Z_m\times G = \Z_m\times \Z_2^t$ for some $t$. Writing $m=r2^s$ with $r$ odd yields $Q = \Z_r\times \Z_{2^s}\times \Z_2^t$. The special case $m=2$ yields $Q=\Z_2^{t+1}$, whose automorphism group is the general linear group over $GF(2)$ and dimension $t+1$, as we have already mentioned. In general, the automorphism group is isomorphic to $\aut{\Z_r}\times \aut{\Z_{2^s}\times \Z_2^t} \cong \Z_r^*\times \aut{\Z_{2^s}\times \Z_2^t}$, and we refer the reader to \cite{HiRh} for more details.
\end{remark}

For an abelian group $G$ and an automorphism $\a$ of $G$ let
\begin{displaymath}
    \Inv_2(G,\a) = \{K\le G : [G:K]\le 2,\,K\alpha = K\}
\end{displaymath}
and let
\begin{displaymath}
    C_{\aut{G}}(\a) = \{\gamma\in\aut{G} : \a\gamma = \gamma\a\}
\end{displaymath}
be the \emph{centralizer} of $\alpha$ in $\aut{G}$.

\begin{proposition}\label{Pr:Size}
Let $\Dih(m,G,\a)$ be a finite dihedral-like automorphic loop such that either $\a\ne 1$ or $\exp(G)>2$. Then
\begin{displaymath}
    |\aut{\Dih(m,G,\a)}| = \left\{\begin{array}{ll}
        |C_{\aut{G}}(\a)|\cdot |G|\cdot (m/2)\phi,&\text{ if $m/2$ is odd},\\
        |C_{\aut{G}}(\a)|\cdot |G|\cdot 2\cdot (m/2)\phi\cdot |\Inv_2(G,\a)|,&\text{ if $m/2$ is even}.
    \end{array}\right.
\end{displaymath}
\end{proposition}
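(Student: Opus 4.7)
The plan is to invoke Proposition \ref{Pr:AllIso} in the special case $\beta = \a$, which identifies $\Aut(\Dih(m,G,\a)) = \Iso(m,G,\a,\a)$ with the parameter set $\Par(m,G,\a,\a)$ via the bijection $\Phi$. It then suffices to count $|\Par(m,G,\a,\a)|$, and this count splits as a product because the four parameters $(\gamma, z, c, h)$ are subject to conditions that are independent of one another in the definition of $\Par$.

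First I would note that the defining condition $\a^\gamma = \a$ on $\gamma \in \Aut(G)$ is precisely $\gamma \in C_{\Aut(G)}(\a)$, contributing a factor of $|C_{\Aut(G)}(\a)|$. Next, $z \in G$ is unconstrained, contributing $|G|$. The number of admissible $c$ is recorded in the first half of Remark \ref{Rm:Params}: there are $(m/2)\phi$ choices when $m/2$ is odd, and $2 \cdot (m/2)\phi$ choices when $m/2$ is even. Finally, when $m/2$ is odd the parameter $h$ is forced to be trivial (so it contributes a factor of $1$), while when $m/2$ is even the second half of Remark \ref{Rm:Params} provides a bijection between the admissible homomorphisms $h: G \to \langle m/2 \rangle$ and the $\a$-invariant subgroups of $G$ of index at most two, i.e., the set $\Inv_2(G, \a)$, contributing a factor of $|\Inv_2(G, \a)|$.

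Multiplying these factors produces exactly the two cases of the claimed formula. This proof is essentially just bookkeeping on top of Proposition \ref{Pr:AllIso} and Remark \ref{Rm:Params}. The one substantive thing to check is that the four parameter conditions are genuinely independent (so that the count is a product rather than a sum over compatibility cases); this is immediate from the definition of $\Par(m, G, \a, \beta)$, where each condition constrains only the parameter it names. Thus there is no real obstacle; the content of the proposition lies in the earlier structural results, with this statement being their numerical consequence.
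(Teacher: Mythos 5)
Your proposal is correct and follows exactly the paper's own route: the paper's proof is a one-line citation of Proposition \ref{Pr:AllIso}, Remark \ref{Rm:Params}, and the observation that $\a^\gamma=\a$ iff $\gamma\in C_{\aut{G}}(\a)$, which is precisely the bookkeeping you spell out. No gaps.
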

\begin{proof}
This follows from Proposition \ref{Pr:AllIso}, Remark \ref{Rm:Params}, and the fact that $\a^\gamma = \a$ if and only if $\gamma\in C_{\aut{G}}(\a)$.
\end{proof}

Let us write
\begin{displaymath}
    \Par(m,G,\a) = \Par(m,G,\a,\a).
\end{displaymath}
Proposition \ref{Pr:AllIso} allows us to describe the structure of $\aut{\Dih(m,G,\a)}$ by working out the multiplication formula on $\Par(m,G,\a)$.

\begin{theorem}\label{Th:ParMult}
Let $Q = \Dih(m,G,\a)$ be a finite dihedral-like automorphic loop such that either $\a\ne 1$ or $\exp(G)>2$. Then $\aut{Q}$ is isomorphic to $(\Par(m,G,\a),\circ)$, where
\begin{equation}\label{Eq:ParMult}
    (\gamma_0,z_0,c_0,h_0)\circ (\gamma_1,z_1,c_1,h_1) = (\gamma_0\gamma_1, z_0\gamma_1+z_1, c_0c_1+z_0h_1, h_0+\gamma_0h_1).
\end{equation}
\end{theorem}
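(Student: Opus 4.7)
The plan is to leverage the bijection $\Phi:\Par(m,G,\a)\to\aut{Q}$ provided by Proposition \ref{Pr:AllIso}, applied with $\beta=\a$. It then suffices to show that $\Phi$ transports the composition of automorphisms of $Q$ to the operation $\circ$ defined in \eqref{Eq:ParMult}. Equivalently, given quadruples $(\gamma_k,z_k,c_k,h_k)$ for $k\in\{0,1\}$ with corresponding automorphisms $\varphi_k$, I must verify that the parameters of $\varphi_0\varphi_1$ are precisely $(\gamma_0\gamma_1,\, z_0\gamma_1+z_1,\, c_0c_1+z_0h_1,\, h_0+\gamma_0h_1)$.

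Computing directly from \eqref{Eq:FromParToIso} gives
\begin{displaymath}
    (i,u)\varphi_0\varphi_1 = \bigl((ic_0+uh_0)c_1 + (\bar{i}z_0+u\gamma_0)h_1,\ \overline{ic_0+uh_0}\,z_1 + (\bar{i}z_0+u\gamma_0)\gamma_1\bigr).
\end{displaymath}
All simplifications needed to match this with the right-hand side of \eqref{Eq:ParMult} rest on the fact that the image of every admissible $h$ is contained in $\{0,m/2\}\subseteq E$: when $m/2$ is odd, $h$ is trivial by definition, and when $m/2$ is even, $m/2$ itself is even. Consequently $uh_0$ is always even in $\Z_m$, which together with $c_0$ odd gives $\overline{ic_0+uh_0}=\bar{i}$. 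Moreover, for any $w\in\{0,m/2\}$ one has $wc_1=w$ (because $c_1$ is odd, so $(m/2)c_1\equiv m/2\pmod m$) and $iw=\bar{i}w$ (because $2w=0$ in $\Z_m$). With these identities the first coordinate collapses to $i(c_0c_1+z_0h_1)+u(h_0+\gamma_0h_1)$ and the second to $\bar{i}(z_0\gamma_1+z_1)+u\gamma_0\gamma_1$, exactly matching \eqref{Eq:FromParToIso} for the claimed quadruple.

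It remains to check that $(\gamma_0\gamma_1,\, z_0\gamma_1+z_1,\, c_0c_1+z_0h_1,\, h_0+\gamma_0h_1)$ actually lies in $\Par(m,G,\a)$. The relation $\a^{\gamma_0\gamma_1}=\a$ is immediate from $\a^{\gamma_k}=\a$; since $z_0h_1\in\{0,m/2\}$, reduction modulo $2$ and modulo $m/2$ shows that $c_0c_1+z_0h_1$ is odd and coprime to $m/2$; and $h_0+\gamma_0h_1$ is a sum of homomorphisms $G\to\langle m/2\rangle$, trivial when $m/2$ is odd, and satisfying $\a(h_0+\gamma_0h_1)=h_0+\gamma_0h_1$ when $m/2$ is even, using $\a h_k=h_k$ and $\a\gamma_0=\gamma_0\a$ (the latter being a rephrasing of $\a^{\gamma_0}=\a$). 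The main obstacle in executing this plan is nothing deep — it is the bookkeeping with parities and the interplay between $h$ and the subgroup $\langle m/2\rangle$; once those routine identities are in place, the verification is mechanical.
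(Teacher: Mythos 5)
Your proposal is correct and follows essentially the same route as the paper: both transport the composition of automorphisms through the bijection $\Phi$ of Proposition \ref{Pr:AllIso} and read off the resulting parameters. The only difference is that the paper evaluates $\varphi_0\varphi_1$ just on $(1,0)$ and $(0,u)$ (which already determine the parameters, and membership of the result in $\Par(m,G,\a)$ is automatic since $\varphi_0\varphi_1$ is an automorphism), whereas you verify the formula on all of $(i,u)$ and check closure of $\Par(m,G,\a)$ explicitly --- more work, but equally valid.
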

\begin{proof}
For $0\le i\le 1$ let $\varphi_i = (\gamma_i,z_i,c_i,h_i)\Phi$. Let $\varphi_2 = \varphi_0\varphi_1$, and set $(\gamma_2,z_2,c_2,h_2) = \varphi_2\Psi$. Because $c_0$ is odd and $u{h_0}$ is even, \eqref{Eq:FromParToIso} yields
\begin{gather*}
    (c_2,z_2) = (1,0)\varphi_2 = (1,0)\varphi_0\varphi_1 = (c_0,z_0)\varphi_1 = (c_0c_1+z_0h_1,z_1+z_0\gamma_1),\\
    (uh_2,u\gamma_2) = (0,u)\varphi_2 = (0,u)\varphi_0\varphi_1 = (uh_0,u\gamma_0)\varphi_1 = (uh_0c_1+u\gamma_0h_1,u\gamma_0\gamma_1).
\end{gather*}
The image of $h_0$ is contained in $\langle m/2\rangle$ and $c_1$ is odd, so $h_0c_1 = h_0$. We are done by Proposition \ref{Pr:AllIso}.
\end{proof}

Here are some special cases of interest of Theorem \ref{Th:ParMult}:

\begin{corollary}
Let $Q = \Dih(m,G,\a)$ be a finite dihedral-like automorphic loop such that either $\a\ne 1$ or $\exp(G)>2$.
\begin{enumerate}
\item[(i)] If $m/2$ is odd then $\aut{Q}\cong (C_{\aut{G}}(\a)\ltimes G)\times \Z_{m/2}^*$.
\item[(ii)] If $m=2$ then $\aut{Q}\cong C_{\aut{G}}(\a)\ltimes G\le \hol{G}$.
\item[(iii)] If $\a=1$ then the projection of $\aut{Q} = (\Par(m,G,\a),\circ)$ onto the first two coordinates is isomorphic to $\hol{G}$.
\item[(iv)] If $\a=1$ and $m=2$ (so that $Q$ is a generalized dihedral group) then $\aut{Q}\cong\hol{G}$.
\end{enumerate}
\end{corollary}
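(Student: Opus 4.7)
The plan is to obtain each of (i)--(iv) by directly unpacking the parameter multiplication \eqref{Eq:ParMult} of Theorem \ref{Th:ParMult} under the extra hypotheses, identifying which of the four coordinates $(\gamma,z,c,h)$ degenerate or decouple. Since Theorem \ref{Th:ParMult} already expresses $\aut{Q}$ as $(\Par(m,G,\a),\circ)$, no further loop-theoretic work is needed.

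For (i), assume $m/2$ is odd. Then Remark \ref{Rm:Params} forces every admissible $h$ to be trivial, so $\Par(m,G,\a)$ consists of triples $(\gamma,z,c)$ with $\gamma\in C_{\aut{G}}(\a)$, $z\in G$, and $c$ odd in $\Z_m$ with $\gcd(c,m/2)=1$. Setting $h_0=h_1=0$ in \eqref{Eq:ParMult} gives $(\gamma_0,z_0,c_0)\circ(\gamma_1,z_1,c_1)=(\gamma_0\gamma_1,\,z_0\gamma_1+z_1,\,c_0c_1)$, which is manifestly a direct product of the $(\gamma,z)$-block (isomorphic to $C_{\aut{G}}(\a)\ltimes G$ via the holomorph rule) and the multiplicative $c$-block. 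The only piece of bookkeeping is to identify the $c$-block with $\Z_{m/2}^*$: for each $\bar c\in\Z_{m/2}^*$ the two lifts of $\bar c$ to $\{0,\ldots,m-1\}$ differ by $m/2$, and since $m/2$ is odd, exactly one of them is odd; hence $c\mapsto c\bmod m/2$ is a bijection from admissible $c$'s to $\Z_{m/2}^*$. This bijection respects multiplication because $m/2$ divides $m$ and because the product of two odd integers stays odd after reducing modulo the even number $m$, so it is a group isomorphism.

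Part (ii) is immediate from (i) with $m/2=1$: then $\Z_{m/2}^*$ is trivial, so $\aut{Q}\cong C_{\aut{G}}(\a)\ltimes G$, which sits inside $\aut{G}\ltimes G=\hol{G}$ as the obvious subgroup. For (iii), $\a=1$ gives $C_{\aut{G}}(\a)=\aut{G}$, and in \eqref{Eq:ParMult} the first two coordinates $(\gamma_0\gamma_1,\,z_0\gamma_1+z_1)$ of a product depend only on the first two coordinates of its factors; hence the projection $\Par(m,G,1)\to\aut{G}\times G$ is a well-defined surjective group homomorphism, and the multiplication induced on $\aut{G}\times G$ is precisely that of $\hol{G}$ as defined in the introduction. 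Part (iv) then combines (ii) and (iii), or may be read off directly from \eqref{Eq:ParMult} since $m=2$ and $\a=1$ force $c=1$ and $h=0$, leaving exactly the holomorph multiplication.

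The only step that is not pure transcription of \eqref{Eq:ParMult} is the parity-and-coprimality identification of the $c$-block with $\Z_{m/2}^*$ in (i); I expect this small piece of elementary number theory to be the sole obstacle.
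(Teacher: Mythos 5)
Your proposal is correct and follows essentially the same route as the paper: specialize the parameter multiplication \eqref{Eq:ParMult} of Theorem \ref{Th:ParMult} under each hypothesis, noting that $h$ is forced trivial when $m/2$ is odd so the group splits as the $(\gamma,z)$-holomorph block times the $c$-block, which the paper identifies with $\Z_{m/2}^*$ via automorphisms of $E$ while you reduce $c$ modulo $m/2$ --- the same identification. Your extra parity-and-coprimality bookkeeping is a correct fleshing-out of a step the paper leaves implicit.
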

\begin{proof}
If $m/2$ is odd, the mappings $h_i$ are trivial. We therefore do not have to keep track of the fourth coordinate in \eqref{Eq:ParMult}, and in the third coordinate we obtain $c_0c_1$. The elements $c_i$ can be identified with automorphisms $g_i$ of $E$ by letting $2g_i = 2c_i$. Parts (i) and (ii) follow.

If $\a=1$ then $C_{\aut{G}}(\a) = \aut{G}$, proving (iii). Part (iv) then follows from (ii) and (iii).
\end{proof}

\section{The inner mapping groups}

We conclude the paper by identifying inner mapping groups as subgroups of the automorphism group for dihedral-like automorphic loops.

\begin{proposition}\label{Pr:InnGens}
Let $Q = \Dih(m,G,\a)$ be a dihedral-like automorphic loop. Then
\begin{align*}
    (k,w)T_{(i,u)} &= (k, (1-s_k)u+s_iw),\\
    (k,w)R_{(j,v),(i,u)} &= (k,w\a^{ij}-s_iu\a^{ij}(\a^{-jk}-1)),\\
    (k,w)L_{(j,v),(i,u)} &= (k,w\a^{ij}+u\a^{ij}(\a^{-jk}-1)).
\end{align*}
for every $(i,u)$, $(j,v)$, $(k,w)\in Q$.
\end{proposition}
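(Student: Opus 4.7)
The plan is to compute each of the three formulas by unwinding the definitions $T_x = R_xL_x^{-1}$, $R_{x,y} = R_xR_yR_{xy}^{-1}$, $L_{x,y} = L_xL_yL_{yx}^{-1}$, applying them to $(k,w)$, and comparing with the stated expressions. In every case the first coordinate of the output is forced to be $k$ (because addition in $\Z_m$ is cancellative), so the real work is solving for the second coordinate. Throughout I will use the key identity $\a^i\a^j = \a^{i+j}$ and $s_{i+j}=s_is_j$.

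For $T_{(i,u)}$, I first compute $(k,w)R_{(i,u)} = (k+i, (s_iw+u)\a^{ki})$, then apply $L_{(i,u)}^{-1}$, i.e.\ solve $(i,u)(k,v) = (k+i, (s_iw+u)\a^{ki})$ for $v$. The first coordinates match automatically, and the equation on second coordinates reduces, after cancelling $\a^{ik-ki}=1$, to $s_ku+v = s_iw+u$, giving $v = (1-s_k)u + s_iw$.

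For $R_{(j,v),(i,u)}$, I expand the double product
\begin{displaymath}
    ((k,w)(j,v))(i,u) = (k+j+i,\,(s_is_jw\a^{kj}+s_iv\a^{kj}+u)\a^{(k+j)i})
\end{displaymath}
and the single product $(j,v)(i,u) = (j+i,(s_iv+u)\a^{ji})$. Then I look for $(k,q)$ satisfying $(k,q)\cdot(j+i,(s_iv+u)\a^{ji})$ equal to the double product. Using the identity $(k+j)i - k(j+i) = j(i-k)$ to simplify exponents and $s_{i+j}=s_is_j$ to cancel the $v$-terms, I arrive at $q = w\a^{ij} + s_{i+j}u\a^{ij}(\a^{-jk}-1)$. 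Since $\a^{-jk}-1\neq 0$ forces $jk$ to be odd (both $m=2$ with $jk=1$, or $m>2$ with $\a^2=1$ and $jk$ odd), in that case $j$ is odd, hence $s_{i+j}=-s_i$, and the two formulas coincide. When $\a^{-jk}-1=0$, both reduce to $q=w\a^{ij}$. The computation for $L_{(j,v),(i,u)}$ is entirely parallel: I compute $(i,u)((j,v)(k,w))$ and the product $(i,u)(j,v)$, and solve $((i,u)(j,v))(k,q)$ equals the former. Using $i(j+k)-(i+j)k = j(i-k)$ yields $q = w\a^{ij} + s_js_ku\a^{ij}(\a^{-jk}-1)$, and again whenever the second summand is nonzero we have $j,k$ both odd, so $s_js_k = s_{j+k}=1$, matching the stated $u\a^{ij}(\a^{-jk}-1)$.

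The main obstacle is simply disciplined bookkeeping of the $\a$-exponents, since we cannot freely reduce modulo $m$ in them; we only have $\a^i\a^j=\a^{i+j}$ and, in the $m>2$ case, $\a^2=1$. The cleanest way is to resist collapsing $\a$-exponents until the very end and then invoke the constraint on $(i,j,k)$ forced by $\a^{-jk}-1\ne 0$ to identify $s_{i+j}$ with $-s_i$ and $s_js_k$ with $1$, thereby replacing the raw formulas by the symmetric ones stated in the proposition.
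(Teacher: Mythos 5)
Your proposal is correct and follows essentially the same route as the paper: write each inner mapping as an equation between two products, cancel the first coordinates, solve for the second coordinate to get $w\a^{ij}+s_{i+j}u\a^{ij}(\a^{-jk}-1)$ (resp.\ $w\a^{ij}+s_{j+k}u\a^{ij}(\a^{-jk}-1)$), and then use the parity of $j$ (resp.\ of $j$ and $k$) together with the vanishing of $\a^{-jk}-1$ to reach the stated symmetric forms. The only caution is that in the case $m=2$, $\a^2\ne 1$ your exponent identity must be read with $\oplus$ in the appropriate places, i.e.\ as the paper's identity \eqref{Eq:Exp}, rather than as literal integer addition such as $(k+j)i$.
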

\begin{proof}
First note that the three types of generators must preserve the first coordinate. To calculate $T_{(i,u)}$, note that the following conditions are equivalent:
\begin{align*}
    (k,w)T_{(i,u)} &= (k,t),\\
    (k,w)(i,u) &= (i,u)(k,t),\\
    (s_iw+u)\a^{ik} &= (s_ku+t)\a^{ik},\\
    s_iw+u &= s_ku+t,\\
    t &= (1-s_k)u+s_iw.
\end{align*}
For $R_{(j,v),(i,u)}$, the following conditions are equivalent, using \eqref{Eq:Exp}:
\begin{align*}
    (k,w)R_{(j,v),(i,u)} &= (k,t),\\
    (k,w)(j,v)\cdot (i,u) &= (k,t)\cdot (j,v)(i,u),\\
    (s_i(s_jw+v)\a^{jk} +u)\a^{(j\oplus k)i} &= (s_{i+j}t + (s_iv+u)\a^{ij})\a^{(i\oplus j)k},\\
    s_{i+j}w\a^{jk+(j\oplus k)i} + u\a^{(j\oplus k)i} &= s_{i+j}t\a^{(i\oplus j)k} + u\a^{ij+(i\oplus j)k},\\
    t &= w\a^{ij}+s_{i+j}u\a^{ij-jk} - s_{i+j}u\a^{ij}.
\end{align*}
We claim that $s_{i+j}u\a^{ij}(\a^{-jk}-1) = -s_iu\a^{ij}(\a^{-jk}-1)$. Indeed, $s_{i+j}=s_is_j$, so we are done if $j$ is odd, and when $j$ is even then $\a^{-jk}-1=0$ in both sides of the equation.

For $L_{(j,v),(i,u)}$, the following conditions are equivalent, using \eqref{Eq:Exp}:
\begin{align*}
    (k,w)L_{(j,v),(i,u)} &= (k,t),\\
    (i,u)\cdot (j,v)(k,w) &= (i,u)(j,v)\cdot (k,t),\\
    (s_{j+k}u+(s_kv+w)\a^{jk})\a^{i(j\oplus k)} &= (s_k(s_ju+v)\a^{ij}+t)\a^{(i\oplus j)k},\\
    s_{j+k}u\a^{i(j\oplus k)} + w\a^{jk+i(j\oplus k)} & = s_{j+k}u\a^{ij+(i\oplus j)k} + t\a^{(i\oplus j)k},\\
    t &=w\a^{ij} + s_{j+k}u\a^{ij-jk} - s_{j+k}u\a^{ij}.
\end{align*}
Again, we claim that $s_{j+k}u\a^{ij}(\a^{-jk}-1) = u\a^{ij}(\a^{-jk}-1)$. Indeed, if $j$ is even or $k$ is even then $\a^{-jk}-1=0$ in both sides, and if both $j$, $k$ are odd then $s_{j+k}=1$.
\end{proof}

By \cite[Theorem 7.5]{JoKiNaVo}, every automorphic loop satisfies the \emph{antiautomorphic inverse property}, that is, $(xy)^{-1}=y^{-1}x^{-1}$. With $J:x \mapsto x^{-1}$ the inversion map, the antiautomorphic inverse property can be rewritten as $L_xJ =JR_{xJ}$, so it follows that $L_{x,y}J = JR_{xJ,yJ}$. Since $L_{x,y}$ is an automorphism in automorphic loops, we also have $L_{x,y}J = JL_{x,y}$. Altogether, we have obtained:

\begin{lemma}[\cite{KiKuPhVo}]\label{Lm:LR}
In an automorphic loop, $L_{x,y} = R_{x^{-1},y^{-1}}$.
\end{lemma}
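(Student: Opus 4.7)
The plan is to formalize the two-line sketch given in the paragraph just above the lemma, namely to combine the identity $L_{x,y}J=JR_{xJ,yJ}$ (coming from the antiautomorphic inverse property) with the identity $L_{x,y}J=JL_{x,y}$ (coming from the fact that inner mappings, in particular left inner mappings, are automorphisms and that any automorphism commutes with the inversion map $J$). Since $J$ is a bijection, cancelling $J$ on the left of the resulting equality $JL_{x,y}=JR_{xJ,yJ}$ yields $L_{x,y}=R_{xJ,yJ}=R_{x^{-1},y^{-1}}$, which is the conclusion.

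To make the first ingredient precise, I would start from the antiautomorphic inverse identity $(xy)^{-1}=y^{-1}x^{-1}$, which rewrites as $L_xJ=JR_{xJ}$ and, using $J^2=1$, as $L_x=JR_{xJ}J$. Substituting this into the definition $L_{x,y}=L_xL_yL_{yx}^{-1}$ gives, after telescoping the interior $J^2$'s,
\begin{displaymath}
    L_{x,y}=J\,R_{xJ}R_{yJ}R_{(yx)J}^{-1}\,J.
\end{displaymath}
Since $(yx)J=x^{-1}y^{-1}$, the bracketed product in the middle is exactly $R_{x^{-1},y^{-1}}$, so $L_{x,y}=JR_{x^{-1},y^{-1}}J$, i.e.\ $L_{x,y}J=JR_{x^{-1},y^{-1}}$.

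For the second ingredient, I would note that in any loop, any automorphism $\varphi$ sends $x^{-1}$ to $\varphi(x)^{-1}$ (because inverses are characterized by the identity element, which is fixed by $\varphi$), so $\varphi J=J\varphi$. Applying this with $\varphi=L_{x,y}\in\Inn(Q)\le\Aut(Q)$ gives $L_{x,y}J=JL_{x,y}$. Combining this with the previous step produces $JL_{x,y}=JR_{x^{-1},y^{-1}}$, and cancelling $J$ finishes the proof. There is no real obstacle here; the only mildly delicate point is to be careful that $J^2=1$ and that $J$ commutes with every automorphism, both of which are standard facts about loops with two-sided inverses.
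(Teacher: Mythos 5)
Your proof is correct and takes essentially the same route as the paper, which derives the lemma in the paragraph preceding it from the antiautomorphic inverse property (giving $L_{x,y}J=JR_{xJ,yJ}$) together with the fact that the automorphism $L_{x,y}$ commutes with $J$. Your added telescoping computation, using $(yx)J=xJ\cdot yJ$ to identify the middle product with $R_{x^{-1},y^{-1}}$, is exactly the detail the paper leaves implicit.
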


Recall that $\inn{Q} = \langle T_x,R_{x,y},L_{x,y} : x,y\in Q\rangle$, and also consider the \emph{right} and \emph{left inner mapping groups}
\begin{displaymath}
    \Inn_{r}(Q) = \langle R_{x,y} : x,y\in Q\rangle,\quad
    \Inn_{\ell}(Q) = \langle L_{x,y} : x,y\in Q\rangle.
\end{displaymath}

\begin{theorem}
Let $Q=\Dih(m,G,\a)$ be a finite dihedral-like automorphic loop such that either $\a\ne 1$ or $\exp(G)>2$. Then:
\begin{enumerate}
\item[(i)] $\Inn_r(Q)=\Inn_{\ell}(Q)$ is isomorphic to the subgroup $\langle\a\rangle\ltimes G(1-\a)$ of $\hol{G}$,
\item[(ii)] $\Inn(Q) = \langle T_x,R_{x,y}:x,y\in Q\rangle = \langle T_x,L_{x,y}:x,y\in Q\rangle$ is isomorphic to the subgroup $(\pm\langle\a\rangle)\ltimes (2G + G(1-\a))$ of $\hol{G}$.
\end{enumerate}
\end{theorem}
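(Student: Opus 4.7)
The plan is to realize $\Inn(Q)$ as a subgroup of $\hol{G}$ via the parametrization $\Psi$ from Proposition \ref{Pr:AllIso}. A glance at Proposition \ref{Pr:InnGens} shows that each generator $T_x$, $R_{x,y}$, $L_{x,y}$ preserves the first coordinate of $(k,w)\in Q$. Under the formula $(i,u)\varphi = (ic+uh,(i\mod 2)z+u\gamma)$, preservation of the first coordinate forces $c=1$ and $h=0$. By Theorem \ref{Th:ParMult}, the multiplication \eqref{Eq:ParMult} restricted to the subset $\{(\gamma,z,1,0)\}$ collapses to $(\gamma_0,z_0)(\gamma_1,z_1) = (\gamma_0\gamma_1,\, z_0\gamma_1+z_1)$, which is precisely the multiplication in $\hol{G}$. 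The problem is thereby reduced to identifying which pairs $(\gamma,z)$ are hit.

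Next I would read off $(\gamma,z)$ for each generator by evaluating the formulas of Proposition \ref{Pr:InnGens} at $(0,u)$ (which yields $\gamma$ as the induced action on $\{0\}\times G$) and at $(1,0)$ (which yields $z$). One obtains
\begin{align*}
T_{(i,u)} &\leftrightarrow (s_i,\, 2u),\\
R_{(j,v),(i,u)} &\leftrightarrow (\a^{ij},\, s_i u\a^{ij}(1-\a^{-j})),\\
L_{(j,v),(i,u)} &\leftrightarrow (\a^{ij},\, -u\a^{ij}(1-\a^{-j})).
\end{align*}
The $z$-components of the $R$- and $L$-images lie in $G(1-\a)$; this uses $G(1-\a)=G(1-\a^{-1})$ as subgroups (via the substitution $u\mapsto u\a$) together with the $\a$-invariance of $G(1-\a)$. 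The $T$-images contribute $\gamma\in\{\pm 1\}$ and $z\in 2G$.

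For part (i), Lemma \ref{Lm:LR} yields $\Inn_r(Q)=\Inn_\ell(Q)$, so it suffices to treat $\Inn_r(Q)$. The containment $\Inn_r(Q)\subseteq \langle\a\rangle\ltimes G(1-\a)$ is now immediate, and the right-hand side is a subgroup of $\hol{G}$ because $G(1-\a)$ is $\langle\a\rangle$-invariant. For the reverse inclusion, specializing $R_{(1,0),(0,u)}$ with $u$ varying sweeps out $\{1\}\times G(1-\a)$, while $R_{(1,0),(1,0)}$ yields $(\a,0)$; products then cover all of $\langle\a\rangle\ltimes G(1-\a)$.

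For part (ii), $\Inn(Q)=\langle T_x,R_{x,y}\rangle$, so I would adjoin the $T$-images to the subgroup from (i). This enlarges the $\gamma$-component to $\pm\langle\a\rangle$ (via $T_{(1,0)}\leftrightarrow(-1,0)$) and the $z$-component to $2G+G(1-\a)$ (via $T_{(0,u)}\leftrightarrow(1,2u)$). Closure of $(\pm\langle\a\rangle)\ltimes(2G+G(1-\a))$ inside $\hol{G}$ follows because $2G$ is $\aut{G}$-invariant and $G(1-\a)$ is $\pm\langle\a\rangle$-invariant (both subgroups are closed under negation). The equality $\langle T,R\rangle=\langle T,L\rangle=\Inn(Q)$ follows from $L_{x,y}=R_{x^{-1},y^{-1}}$. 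The principal technical obstacle is the parity bookkeeping when simplifying $\a^{-j}-1$ and verifying that the resulting element lies in $G(1-\a)$, especially in the $m=2$ case where $\a^2$ need not equal $1$; the identity $G(1-\a)=G(1-\a^{-1})$ is what makes the argument uniform across the two cases.
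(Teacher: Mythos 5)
Your proposal is correct and follows essentially the same route as the paper: identify the inner mappings inside $\Par(m,G,\a)$ via $\Psi$, observe that preservation of the first coordinate forces $c=1$ and $h=0$ so that everything lives in a copy of $\hol{G}$, read off the pairs $(\gamma,z)$ for each generator, and compare subgroups. The only (immaterial) difference is that you compute with the $R_{x,y}$ generators where the paper uses $L_{x,y}$, which is equivalent by Lemma \ref{Lm:LR}, and your explicit appeal to $G(1-\a)=G(1-\a^{-1})$ is the same observation the paper makes when rewriting $u(\a^{-1}-1)$ as $u\a^{-1}(1-\a)$.
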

\begin{proof}
We obtain $\Inn_r(Q)=\Inn_{\ell}(Q)$ and $\Inn(Q) = \langle T_x,R_{x,y}:x,y\in Q\rangle = \langle T_x,L_{x,y}:x,y\in Q\rangle$ as an immediate consequence of Lemma \ref{Lm:LR}.

Let us now verify that $\langle\a\rangle\ltimes G(1-\a)$ and $(\pm\langle\a\rangle) \ltimes (2G+G(1-\a))$ are subgroups of $\hol{G}$. Indeed, for $\delta$, $\varepsilon\in\{1,-1\}$, we have
\begin{align*}
    (\delta\a^i,2u+v(1-\a))(\varepsilon\a^j,2w+z(1-\a)) &= (\delta\varepsilon\a^{i+j},(2u+v(1-\a))\varepsilon\a^j + 2w + z(1-\a))\\
        &=(\delta\varepsilon\a^{i+j},2(u\varepsilon\a^j+w) + (v\varepsilon\a^j+z)(1-\a)),
\end{align*}
since $1-\a$ commutes with $\pm\a^j$.

By Proposition \ref{Pr:InnGens}, we have
\begin{align*}
    (0,w)T_{(i,u)} = (0,s_iw),\quad& (1,0)T_{(i,u)} = (1,2u),\\
    (0,w)L_{(j,v),(i,u)} = (0,w\a^{ij}),\quad &(1,0)L_{(j,v),(i,u)} = (1,u\a^{ij}(\a^{-j}-1)).
\end{align*}
Therefore
\begin{align*}
    T_{(i,u)}\Psi &= (s_i,2u,1,0),\\
    L_{(j,v),(i,u)}\Psi &= (\a^{ij},u\a^{ij}(\a^{-j}-1),1,0),
\end{align*}
where we identity the sign $s_i$ with the automorphism $u\mapsto s_iu$. Hence $\inn{Q}$ is nontrivial only on the first two coordinates of $\Par(m,G,\a)$, and it can be identified with a subgroup of $\hol{G}$, by Proposition \ref{Pr:AllIso} and Theorem \ref{Th:ParMult}.

With $u=0$ and $i=j=1$ we get $(\a,0)$ from $L_{(j,v),(i,u)}$. Taking powers of this element, we see that $\langle\a\rangle\times 0\subseteq\Inn_\ell(Q)$. With $i=0$ and $j=1$ we get $(1,u(\a^{-1}-1))=(1,u\a^{-1}(1-\a))$ from $L_{(j,v),(i,u)}$, showing that $1\times G(1-\a)\subseteq\Inn_\ell(Q)$. Therefore $(\langle\a\rangle\times 0)(1\times G(1-\a)) = \langle\a\rangle\times G(1-\a)\subseteq\Inn_\ell(Q)$. On the other hand, using the fact that either $m=2$ or $\a^2=1$, it is easy to see that the generators $L_{(j,v),(i,u)}$ belong to $\langle\a\rangle\times G(1-\a)$. This proves (i).

With $u=0$ we get $(s_i,0)$ from $T_{(i,u)}$, and with $i=0$ we get $(0,2u)$ from $T_{(i,u)}$. Hence $(\pm 1)\times 0$ and $0\times 2G$ are also subsets of $\inn{Q}$, and thus $(\pm\langle\a\rangle)\ltimes (2G + G(1-\a))\subseteq\inn{Q}$. The other inclusion is again clear from an inspection of $T_{(i,u)}$.
\end{proof}

\begin{corollary}
Let $Q=\Dih(2,G,1)$ be a generalized dihedral group such that $\exp(G)>2$. Then the inner automorphism group of $Q$ is isomorphic to $\Z_2\ltimes 2G$.
\end{corollary}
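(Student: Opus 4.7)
The plan is to apply part (ii) of the preceding theorem with $m = 2$ and $\a = 1$. The hypothesis ``$\a \ne 1$ or $\exp(G) > 2$'' is satisfied because we assume $\exp(G) > 2$, so the theorem applies. Moreover, since $Q$ is a group, its loop-theoretic inner mapping group $\Inn(Q)$ coincides with the usual inner automorphism group $Q/Z(Q)$, so it suffices to compute $\Inn(Q)$ in the loop sense.

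Substituting $\a = 1$ into the formula $\Inn(Q) \cong (\pm\langle\a\rangle) \ltimes (2G + G(1-\a))$, we get $\langle\a\rangle = 1$ and $G(1-\a) = 0$. Thus $\pm\langle\a\rangle = \{1,-1\} \cong \Z_2$, and $2G + G(1-\a) = 2G$. The semidirect product structure is inherited from $\Hol(G)$, with $-1$ acting on $2G$ by inversion (negation). Therefore $\Inn(Q) \cong \Z_2 \ltimes 2G$, as claimed.

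The only thing to check is that the computation really yields a nontrivial semidirect product and not a direct product, but this is immediate: the action of $\pm\langle\a\rangle = \{\pm 1\}$ on the additive group $2G$ inside $\Hol(G)$ is by the sign, which is nontrivial on $2G$ precisely when $2G$ contains an element of order greater than $2$. Since $\exp(G) > 2$, there is some $g \in G$ with $2g \ne 0$; if also $4g = 0$ then $-2g = 2g \ne 0$ and we pick a different witness, but in any case $2G$ is nontrivially acted upon unless $2G \subseteq G_2$, which is a non-issue here because the statement only asserts an isomorphism with $\Z_2 \ltimes 2G$ (the semidirect product with the sign action), not that this action is faithful. Hence no real obstacle arises, and the corollary follows from a direct specialization of the theorem.
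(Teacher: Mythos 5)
Your proof is correct and matches the paper's (implicit) argument: the corollary is a direct specialization of part (ii) of the preceding theorem with $m=2$ and $\a=1$, giving $\langle\a\rangle=1$, $G(1-\a)=0$, and hence $(\pm\langle\a\rangle)\ltimes(2G+G(1-\a))=\Z_2\ltimes 2G$ with the sign action inherited from $\hol{G}$. Your closing digression about whether the semidirect product is genuinely non-direct is unnecessary (the statement asserts only the isomorphism type, as you yourself note), but it does no harm.
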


\begin{example}
Let $p$ be an odd prime, $G=\Z_p$, and $\a\ne 1$ the unique involutory automorphism of $G$, that is, $\a=-1$. Let $Q=\Dih(2,G,\a)$. Then $\pm\langle\a\rangle\cong\Z_2\cong\langle\a\rangle$, and $G(1-\a) = 2G = G$. Therefore $\Inn_\ell(Q) = \Inn_r(Q) = \inn{Q}\cong \Z_2\ltimes \Z_p$ is isomorphic to the dihedral group of order $2p$.
\end{example}

\end{document}